\newtheorem{theorem}{Theorem}[section]
\newtheorem{lemma}[theorem]{Lemma}
\newtheorem{corollary}[theorem]{Corollary}
\newtheorem{proposition}[theorem]{Proposition}
\newcommand{\cN}{{\mathcal N}}
\newcommand{\cS}{{\mathcal S}}
\newcommand{\id}{{\rm id}}
\newcommand{\blue}{\textcolor{black}}
\newcommand{\Blue}{\textcolor{black}}
\newcommand{\green}{\textcolor{black}}
\newcommand{\red}{\textcolor{black}}
\begin{document}

\title[Defining networks using ancestral profiles]{Defining phylogenetic networks using ancestral profiles}

\author{Allan Bai}
\address{School of Mathematics and Statistics, University of Canterbury, Christchurch, New Zealand}
\email{amb337@uclive.ac.nz}

\author{P\'{e}ter L.\ Erd\H{o}s} 
\address{Alfr\'{e}d R\'{e}nyi Institute of Mathematics, Hungarian Academy of Sciences, Budapest, Hungary}
\email{erdos.peter@renyi.hu}

\author{Charles Semple}
\address{School of Mathematics and Statistics, University of Canterbury, Christchurch, New Zealand}
\email{charles.semple@canterbury.ac.nz}

\author{Mike Steel}
\address{School of Mathematics and Statistics, University of Canterbury, Christchurch, New Zealand}
\email{mike.steel@canterbury.ac.nz}

\thanks{The first, third, and fourth authors were supported by the New Zealand Marsden Fund (UOC1709). The second author was supported in part by the National Research, Development and Innovation Office (NKFIH grants K~116769 and KH~126853).}

\keywords{Tree-child networks, orchard networks, accumulation phylogenies, ancestral profiles, path-tuples}

\subjclass{05C85, 92D15}

\date{\today}

\begin{abstract}
Rooted phylogenetic networks provide a more complete representation of the ancestral relationship between species than phylogenetic trees when reticulate evolutionary \blue{processes} are at play. One way to reconstruct a \blue{phylogenetic} network is to consider  its `ancestral profile' (the number of paths from each ancestral vertex to each \blue{leaf}). In general, this information does not uniquely determine  the underlying phylogenetic network. A recent paper considered a new class of phylogenetic networks \blue{called} `orchard networks' where this uniqueness was claimed to hold. Here we show that an additional restriction on the network, \blue{that of being} \Blue{`stack-free'}, is required in order for the original uniqueness claim to hold. On the other hand, if the additional stack-free restriction is lifted, we establish an alternative result; namely, there is uniqueness within the class of orchard networks up to the resolution of vertices of high in-degree.
\end{abstract}

\maketitle

\section{Introduction}

Evolutionary relationships between species are generally represented by phylogenetic trees, where the species at the present appearing as the leaves of the tree, and ancestral species corresponding to interior vertices.  Over the last several decades, a wide variety of methods have been developed for reconstructing phylogenetic trees from genomic data \cite{fel04}, and these are now widely used in large-scale studies in systematic biology (e.g.,~\cite{jet12, uph19}) and associated fields (e.g, in  epidemiology to classify strains of viruses such as HIV, influenza, and SARS-Cov2~\cite{wor20}). However, for certain groups of organisms, the tree model is overly simplistic. This is because of the intricacies of ancestral processes whereby lineages not only split, but sometimes combine together to form new lineages. This latter pattern of evolution is collectively referred to as `reticulation', and includes the formation of hybrid species, horizontal gene transfer, and endosymbiosis events~\cite{doo99, hus10, koo15}. Consequently, certain portions of the `Tree of Life' are better described by a phylogenetic network that explicitly exhibits reticulation events. Although there is a well-developed theory for reconstructing phylogenetic trees from various types of data~\cite{fel04, sem03}, phylogenetic network reconstruction is much more subtle. In particular, for certain types of data it is impossible to distinguish between different (non-isomorphic) phylogenetic networks~\cite{par15}. One way to address this non-identifiability issue is to work within a subclass of phylogenetic networks that includes phylogenetic trees along with phylogenetic networks that are sufficiently tame. An example is the class of `normal' networks, for which certain \blue{reconstructive} results have been established~\cite{bor18, lin20, wil08, wil10}.  The slightly more general class of `tree-child' networks also allows for unique reconstruction from various types of data~\cite{car08, car09}.
 
\blue{In this paper}, we focus on the unique \blue{reconstruction} of networks  from  their  `ancestral profile', which, roughly speaking, is the number of paths from each ancestral vertex in the network to each extant leaf. It was shown that all \blue{binary} `tree-sibling time-consistent' and all \blue{binary} `tree-child' networks are uniquely determined by their ancestral profile~\cite{car08, car09}. In a recent paper~\cite{erd19}, this result was extended to \blue{the} larger class of \blue{binary} `orchard networks', which allows for an unbounded number of vertices in the network for a given \blue{number of leaves. This contrasts with the classes of binary `tree-sibling time-consistent' and binary} `tree-child' networks, for which the size of the network is bounded by the number of leaves. However, the result in~\cite{erd19} omitted an extra condition required for unique reconstruction, namely, the network cannot contain a tower \blue{(`stack')} of reticulations. We show here that this `stack-free' condition is necessary, and that when this extra condition is included the original result claimed in~\cite{erd19} holds. Moreover, this \blue{result} then generalises \blue{(Theorem~\ref{main1})} to the class of \blue{stack-free orchard} networks \blue{in which} reticulate vertices \blue{are allowed} to have arbitrarily high in-degree. \blue{Note that, the uniqueness is amongst all phylogenetic networks with vertices of arbitrarily high in-degree, that is,} the ancestral profile of a stack-free orchard network is always different to the ancestral profile of any other phylogenetic network, even if it is \blue{neither} orchard \blue{nor} stack-free. When the stack-free condition is lifted, we describe a second result (Theorem~\ref{main2}) which states that, within the class of orchard networks, \blue{the} ancestral profile \blue{of an orchard network} uniquely determines the orchard \blue{network} up to the resolution of vertices of high in-degree.

The structure of the paper is as follows. \blue{The next section recalls} definitions of phylogenetic networks (which are permitted here to contain vertices of high in-degree), along with the notion of ancestral profile. We \blue{also} describe the class of orchard networks. \blue{This class} was introduced and studied independently \blue{in}~\cite{erd19} (for binary \blue{networks}) and~\cite{jan20} (for networks that allow high in-degree). In Section~\ref{results}, we turn to the question of whether the ancestral profile of an orchard network determines that network (either within the class of orchard networks, or more generally), \blue{and state the two main results of the paper. The first main result, Theorem~\ref{main1}, states} a corrected form of~\cite[Theorem~2.2]{erd19} for stack-free networks. The necessary adjustments required for \blue{the proof of Theorem~\ref{main1} are given} in the Appendix. \blue{The second main result, Theorem~\ref{main2},} is a \blue{reconstructive} result that holds when the stack-free condition is removed. \blue{Additionally, we discuss the relationships between Theorem~\ref{main1} and the main results in~\cite{car08, car09}.} \blue{The proof of Theorem~\ref{main2} is given in Section~\ref{proof}. Some concluding comments are given in Section~\ref{comments}, the last section of the paper.}

\section{Preliminaries}

Throughout the paper $X$ denotes a non-empty finite set and, unless otherwise stated, all paths are directed.
For sets $A$ and $B$, we denote the set obtained from $A$ by removing every element in $A$ that is also in $B$ by $A-B$. \blue{Furthermore, if $(u, v)$ is an arc of an acyclic directed graph, we say $u$ is a {\em parent} of $v$.}

\noindent {\bf Phylogenetic networks.} The following definition of phylogenetic network is slightly more general than in~\cite{erd19}. A {\em phylogenetic network on $X$} is a rooted acyclic directed graph with no arcs in parallel and satisfying the following properties:
\begin{enumerate}[(i)]
\item the (unique) root has in-degree zero and out-degree two;

\item a vertex with out-degree zero has in-degree one, and the set of vertices with out-degree zero is $X$; and

\item all other vertices either have in-degree one and out-degree two, or in-degree at least two and out-degree one.
\end{enumerate}
\noindent We will refer to a phylogenetic network in which every vertex has in-degree at most two as a {\em binary} phylogenetic network. 

We pause to make two technical remarks. First,  if $|X|=1$, we additionally allow a single vertex to be a phylogenetic network, in which case, the root is the vertex in $X$. 
Second, suppose that $\cN_1$ and $\cN_2$ are two phylogenetic networks on $X$ with vertex and arc sets $V_1$ and $E_1$, and $V_2$ and $E_2$, respectively. We say $\cN_1$ is {\em isomorphic} to $\cN_2$ if there exists a bijection $\varphi: V_1\rightarrow V_2$ such that $\varphi(x)=x$ for all $x\in X$, and $(u, v)\in E_1$ if and only if $(\varphi(u), \varphi(v))\in E_2$ for all $u, v\in V_1$.

Let $\cN$ be a phylogenetic network on $X$. The vertices with out-degree zero are the {\em leaves} of $\cN$, and so $X$ is called the {\em leaf set} of $\cN$. Furthermore, vertices with in-degree one and out-degree two are {\em tree vertices}, while vertices of in-degree \blue{at least} two and out-degree one are {\em reticulations}. The arcs directed into a reticulation are called {\em reticulation arcs}, all other arcs are {\em tree arcs}. \blue{To illustrate, a phylogenetic network on $\{x_1, x_2, \ldots, x_6\}$ is shown in Fig.~\ref{orchard}. Vertices $u$ and $v$ are reticulations, while vertex $w$ is a tree vertex. Here, as throughout the paper, all arcs are directed down the page.}

\begin{figure}
\center
\input{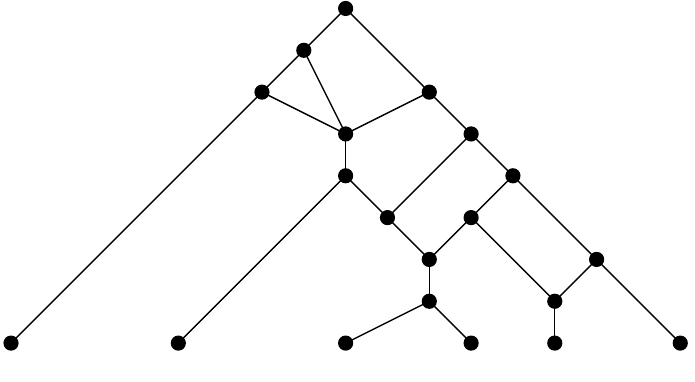_t}
\caption{\blue{A phylogenetic network on $\{x_1, x_2, \ldots, x_6\}$. The $2$-element set $\{x_3, x_4\}$ is a cherry, while the ordered pair $\{x_5, x_6\}$ is a reticulated cherry, in which $x_5$ is the reticulation leaf.}}
\label{orchard}
\end{figure}

\noindent {\bf Ancestral tuples and ancestral profile.} Let $\cN$ be a phylogenetic network on $X$ with vertex set $V$. Let $v_1, v_2, \ldots, v_t$ be a fixed (arbitrary) labelling of the vertices in $V-X$. For all $x\in X$, the {\em ancestral tuple} of $x$, denoted $\sigma(x)$, is the $t$-tuple whose $i$-th entry is the number of paths in $\cN$ from $v_i$ to $x$. Denoted by $\Sigma_{\cN}$, we call \red{the set}
$$\Sigma_{\cN}=\{\red{(x, \sigma(x))}: x\in X\},$$
\red{of ordered pairs} the {\em ancestral profile} of $\cN$. Furthermore, if $\cN'$ is a phylogenetic network on $X$ and, up to an ordering of the non-leaf vertices of $\cN'$, we have $\Sigma_{\cN'}=\Sigma_{\cN}$, we say {\em $\cN'$ realises $\Sigma_{\cN}$}. Lastly, although $\Sigma_{\cN}$ depends on the ordering of the vertices in $V-X$, the ordering is fixed and so the labelling can be effectively ignored.

To illustrate these notions consider the two networks $\cN$ and $\cN'$ shown in Fig.~\ref{fig1}.  Under the labelling of the non-leaf vertices of $\cN$ shown, we have
\begin{align*}
\Sigma_{\cN}=\{(x_1, & (1, 1, 0, 0, 0, 0, 0)), (x_2, (1, 0, 1, \blue{1}, 0, 0, 0)), \\
 & (x_3, (1, 0, 1, 0, 1, 0, 0)), (x_4, (3, 1, 2, 1, 1, 1, 1)) \}.
 \end{align*}
The other network $\cN'$ in Fig.~\ref{fig1} also realises $\Sigma_{\cN}$, because under the  ordering of the non-leaf vertices of $\cN'$ shown in this figure, we have
$\Sigma_{\cN'}=\Sigma_{\cN}$. On the other hand, $\cN$ and $\cN'$ are not isomorphic. To see this observe that the parent of $x_2$ in $\cN$ has a unique path  to $x_4$ of length~$3$, while the parent of $x_2$ in $\cN'$ also has a unique path to $x_4$ but this path has length~$2$.

\begin{figure}
\center
\input{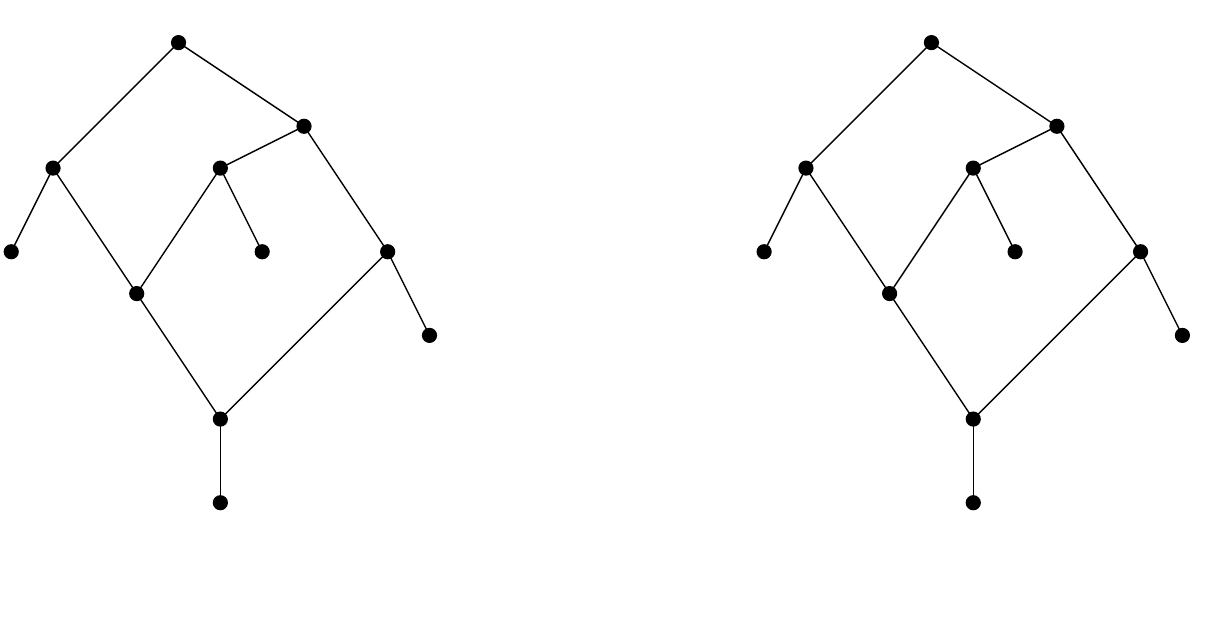_t}
\caption{Two binary networks $\cN$ and $\cN'$ with the same ancestral profile (for the labelling of the vertices in $V-X$ shown). However, $\cN$ and $\cN'$ are not isomorphic. }
\label{fig1}
\end{figure}

\noindent {\bf Cherries and reticulated cherries.} Let $\mathcal N$ be a phylogenetic network on $X$, and let $\{a, b\}$ be a $2$-element subset of $X$. Let $p_a$ and $p_b$ denote the parents of $a$ and $b$, respectively. We say $\{a, b\}$ is a {\em cherry} of $\mathcal N$ if $p_a=p_b$. Furthermore, if one of the parents, say $p_b$, is a reticulation and $(p_a, p_b)$ is an arc in $\mathcal N$, then $\{a, b\}$ is a {\em reticulated cherry} of $\cN$, in which case, $b$ is the {\em reticulation leaf} of the reticulated cherry. Observe that $p_a$ is necessarily a tree vertex. \blue{As an example, in Fig.~\ref{orchard}, $\{x_3, x_4\}$ is a cherry, while $\{x_5, x_6\}$ is a reticulated cherry with $x_5$ as the reticulation leaf.}

We next describe two operations associated with cherries and reticulated cherries that are central to this paper. Let $\cN$ be a phylogenetic network. First suppose that $\{a, b\}$ is a cherry of $\cN$. Then {\em reducing} $b$ is the operation of deleting $b$ and suppressing the resulting vertex of in-degree one and out-degree one. If the parent of $a$ and of $b$ is the root of $\cN$, then reducing $b$ is the operation of deleting $b$ as well as deleting the root of $\cN$, thus leaving only the isolated vertex $a$. Now suppose that $\{a, b\}$ is a reticulated cherry of $\cN$ in which $b$ is the reticulation leaf. Then {\em cutting} $\{a, b\}$ is the operation of deleting the reticulation arc joining the parents of $a$ and $b$, and suppressing \blue{any} resulting vertices of in-degree one and out-degree one. \blue{Note that the parent of $a$ is always suppressed. However, the parent of $b$ is suppressed only if its in-degree in $\cN$ is exactly two.} It is easily seen that the operations of reducing a cherry and cutting a reticulated cherry both result in a phylogenetic network. Collectively, we refer to these two operations as {\em cherry reductions}. 

\noindent {\bf Orchard networks.} For a phylogenetic network $\cN$, the sequence
\begin{align}
\cN=\cN_0, \cN_1, \cN_2, \ldots, \cN_k
\label{seq1}
\end{align}
of phylogenetic networks is a {\em cherry-reduction sequence of $\cN$} if, for all $i\in \{1, 2, \ldots, k\}$, the phylogenetic network $\cN_i$ is obtained from $\cN_{i-1}$ by a (single) cherry reduction. The sequence is {\em maximal} if $\cN_k$ has no cherries or reticulated cherries. If $\cN_k$ consists of a single vertex, the sequence is {\em complete}. \Blue{If $\cN$ has a complete cherry-reduction sequence, then $\cN$ is} an {\em orchard network}. \blue{It is easily checked that the phylogenetic network shown in Fig.~\ref{orchard} is orchard.}

A fundamental property of orchard networks is that if one cherry-reduction sequence leads to a single vertex  (in which case $\cN$ is an orchard network), then every maximal cherry-reduction sequence leads to a single vertex (regardless of any choices made during the construction of a cherry-reduction sequence). This result was established for binary orchard networks in~\cite[Proposition 4.1]{erd19}, and independently shown to hold for general phylogenetic networks in~\cite[Theorem 1]{jan20}.

\begin{proposition}
\label{pro:orchard}
Let $\cN$ be an orchard network, and let
$$\cN = \cN_0, \cN_1, \ldots, \cN_\ell$$
be a maximal cherry-reduction sequence. Then this sequence is complete.
\end{proposition}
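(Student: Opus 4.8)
The plan is to regard cherry reductions as the rewrite steps of an abstract reduction system on phylogenetic networks, and to derive the statement from \emph{termination} together with \emph{confluence}, via Newman's lemma. Termination is immediate: a single cherry reduction strictly decreases the number of vertices, since reducing a cherry deletes a leaf (and suppresses its former parent), while cutting a reticulated cherry deletes an arc and suppresses at least the parent of $a$. Hence every cherry-reduction sequence is finite, maximal sequences exist, and a network is a \emph{normal form} (admits no further cherry reduction) exactly when it has neither a cherry nor a reticulated cherry. In particular the single vertex is a normal form, and $\cN$ being orchard means precisely that some sequence from $\cN$ terminates at the single vertex.

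The heart of the argument is a diamond (local confluence) lemma: whenever two distinct cherry reductions are applicable to a network, producing $\cN_1$ and $\cN_2$, there are cherry-reduction sequences taking $\cN_1$ and $\cN_2$ to networks that agree up to isomorphism (allowing, where necessary, a relabelling of leaves). I would prove this by a finite case analysis organised by how the vertices and arcs manipulated by the two reductions overlap. If the two cherries (or reticulated cherries) are vertex-disjoint, then the reductions commute and each may be applied after the other to reach a common network in one further step. The remaining, overlapping, cases are: the two reductions act on the same cherry but delete its two different leaves (the results are isomorphic via the transposition of those leaves); a leaf is shared between a cherry and a reticulated cherry; the reticulation leaf of a reticulated cherry is shared between two reticulated cherries, so the underlying reticulation has two relevant parents; and the two reductions share a single non-leaf vertex or reticulation arc. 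In each case I would write down the short completing sequences on either side and verify that the two outcomes coincide up to isomorphism.

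Keeping track of exactly when a degree-two vertex is suppressed --- which, for the parent of a reticulation leaf, depends on whether that reticulation has in-degree exactly two --- and handling the possibility that one reduction destroys or amalgamates the structure on which the other acts, is where I expect the real work, and the main obstacle, to lie; this is also the only place where the extra generality of high in-degree reticulations has to be accommodated.

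With termination and local confluence established, Newman's lemma gives that the reduction system is confluent up to isomorphism, so that any two normal forms reachable from $\cN$ are isomorphic and therefore have the same number of vertices. (Alternatively one can bypass Newman's lemma and run the standard strong induction on the number of vertices of $\cN$, applying the diamond lemma at the first step of the two sequences.) Now let $\cN = \cN_0, \cN_1, \ldots, \cN_\ell$ be the given maximal sequence. Its terminal network $\cN_\ell$ is a normal form reachable from $\cN$; since $\cN$ is orchard, the single vertex is also such a normal form, so $\cN_\ell$ has exactly one vertex. Hence $\cN_\ell$ is a single vertex and the sequence is complete.
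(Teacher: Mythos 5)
Your statement is one the paper itself never proves: Proposition~\ref{pro:orchard} is quoted from the literature, with the binary case attributed to~\cite[Proposition~4.1]{erd19} and the general case to~\cite[Theorem~1]{jan20}. So your attempt must be judged on its own terms, and on those terms it is a proof plan rather than a proof. The framing is sound: termination is correct (reducing a cherry deletes a leaf and suppresses its parent; cutting a reticulated cherry always suppresses the parent of $a$, so every reduction strictly decreases the vertex count), and passing to isomorphism classes is legitimate here because a cherry reduction on $\cN$ corresponds exactly to a cherry reduction on any network isomorphic to $\cN$, with isomorphic results --- though you should say this explicitly, since Newman's lemma for rewriting modulo an equivalence relation fails in general and needs precisely this compatibility to descend to the quotient system.

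The genuine gap is that the diamond (local confluence) lemma --- which you yourself identify as ``the heart of the argument'' and ``where the real work lies'' --- is never established. You enumerate the overlap cases and say you ``would write down the short completing sequences on either side and verify'' them; that verification \emph{is} the mathematical content of the proposition, and everything you do prove (termination, the appeal to Newman) is routine by comparison. Concretely, the cases that carry the weight are: two reticulated cherries $\{a,b\}$ and $\{c,b\}$ sharing the reticulation leaf $b$, where the completing sequences differ according to whether $p_b$ has in-degree exactly two (cutting one arc suppresses $p_b$ and turns the other reticulated cherry into a plain cherry) or in-degree at least three (it remains a reticulated cherry); and reducing the two different leaves of a single cherry, where the two results agree only after a relabelling of leaves, forcing the whole confluence argument to run modulo relabelling rather than modulo leaf-fixing isomorphism as defined in this paper. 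These checks are exactly where the cited proofs do their work --- for instance,~\cite[Theorem~1]{jan20} is in essence an exchange argument showing that any applicable cherry reduction can be moved to the front of a complete cherry-reduction sequence, which amounts to verifying the same commutation cases you defer. The good news is that your plan is viable: because a tree vertex has only two children and a reticulation only one, the ways in which two reductions can overlap are very limited, so the case analysis is short once organised; but until it is actually carried out, the proposition has not been proved.
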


\section{Main Results}
\label{results}

\blue{In this section we state the two main results of the paper.} A {\em stack} in a phylogenetic network $\cN$ is a pair of reticulations, $u$ and $v$, such that one of the reticulations, say $u$, is a parent of the other; that is, $(u, v)$ is an arc of $\cN$. We refer to $(u, v)$ as a {\em stack arc} of $\cN$.   A phylogenetic network is said to be {\em stack-free} if it has no stacks.

It was claimed in~\cite[Theorem~2.2]{erd19} that, up to isomorphism, every binary orchard network is uniquely determined by its ancestral profile. However, Fig.~\ref{fig1} shows a pair of non-isomorphic phylogenetic networks that are both \blue{binary} orchard networks, and which have identical ancestral profiles. Notice that both networks in  Fig.~\ref{fig1} contain a stack, \blue{in particular, reticulations $v_6$ and $v_7$}. A corrected version of~\cite[Theorem~2.2]{erd19} \blue{is Theorem~\ref{main1}, the first main result of the paper}, which is now extended to allow phylogenetic networks \blue{with reticulations of in-degree at least two}. The proof follows the same argument as in~\cite{erd19}, but some adjustments are required to certain lemmas to allow for the generality beyond binary phylogenetic networks, and (at one point) to impose the stack-free requirement.  We describe the required adjustments to the original proof in the Appendix.

\begin{theorem}
Let $\cN$ be a stack-free orchard network on $X$ with vertex set $V$. Then, up to isomorphism, $\cN$ is the unique phylogenetic network on $X$ realising $\Sigma_{\cN}$.
\label{main1}
\end{theorem}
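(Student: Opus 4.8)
The plan is to argue by induction on the number $|V|$ of vertices, using the fact that an orchard network always contains a cherry or a reticulated cherry (Proposition~\ref{pro:orchard} guarantees that repeatedly removing such pairs terminates at a single vertex). The base case, in which $\cN$ is a single leaf, is immediate. For the inductive step, I would first show that the reducible pairs of $\cN$ and their types can be recovered directly from $\Sigma_{\cN}$, and that any phylogenetic network $\cM$ on $X$ realising $\Sigma_{\cN}$ is forced to contain a reducible pair of the same type on the same two leaves. I would then reduce this pair in both $\cN$ and $\cM$, verify that the resulting networks $\cN'$ and $\cM'$ have the same ancestral profile and that $\cN'$ is again a stack-free orchard network, apply the inductive hypothesis to conclude $\cN'\cong\cM'$, and finally reverse the reduction to obtain $\cN\cong\cM$.

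The detection step rests on two path-counting observations. Since every directed path into a leaf $x$ ends with the arc from the unique parent $p_x$ of $x$, the number of paths from a non-leaf vertex $v$ to $x$ equals the number of paths from $v$ to $p_x$. Consequently, if $\{a,b\}$ is a cherry then $\sigma(a)=\sigma(b)$; conversely, $\sigma(a)=\sigma(b)$ forces a directed path $p_a\leadsto p_b$ (because the coordinate indexed by $p_a$ shows that $p_a$ has a path to $b$, which must pass through $p_b$) and, symmetrically, $p_b\leadsto p_a$, so acyclicity gives $p_a=p_b$ and hence a cherry. This equivalence makes no reference to the class of network and so transfers verbatim to $\cM$. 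For a reticulated cherry $\{a,b\}$ with reticulation leaf $b$, expanding the count of paths to $b$ over the parents of the reticulation $p_b$ yields $\sigma(a)\le\sigma(b)$ coordinatewise, with strict inequality in the coordinate indexed by $p_b$ (where the entries are $0$ for $a$ and $1$ for $b$); in particular the reticulation leaf is recognisable as the leaf carrying the larger tuple.

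The substantive work, and the main obstacle, is the reticulated-cherry case: from the domination relations among the tuples one must single out a pair that genuinely arises from an arc $(p_a,p_b)$, and recover the in-degree of $p_b$ (which, after a cut, either drops by one or triggers suppression of $p_b$). This is precisely where the stack-free hypothesis is indispensable. The profile records only \emph{numbers} of paths and is blind to their \emph{lengths}; a stacked reticulation can therefore reproduce a given profile while altering path lengths, which is exactly the ambiguity exhibited by the two networks of Fig.~\ref{fig1}. Requiring $\cN$ to be stack-free removes this freedom and forces the local structure of $\cM$ about the reducible pair---its reticulation in-degree, the adjacency $(p_a,p_b)$, and the absence of a stack there---so that the cut can be reversed unambiguously. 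I would carry out these forcing arguments via the lemmas of~\cite{erd19}, adjusted in the Appendix both to allow reticulations of high in-degree and to insert the stack-free requirement at the single point where it is needed; the remaining verifications (equality of the reduced profiles, and preservation of the stack-free orchard property under a cherry reduction) are then routine.
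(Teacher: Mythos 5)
Your proposal is correct and follows essentially the same route as the paper: an induction via cherry reductions in which cherries and reticulated cherries (and their types) are detected from the ancestral profile, with the forcing arguments carried out by the lemmas of~\cite{erd19} adjusted exactly as in the Appendix --- to allow reticulations of in-degree greater than two and to insert the stack-free hypothesis at the reticulated-cherry detection step (where it guarantees the grandparents of the reticulation leaf are tree vertices). Your direct tuple arguments for cherry detection and for the domination relation $\sigma(a)\le\sigma(b)$ in a reticulated cherry are sound and match the role played by the paper's replacement for~\cite[Lemma~3.3]{erd19}.
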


\blue{We now consider what can be said if the stack-free condition is lifted.} Let $\cN$ be a phylogenetic network on $X$ with vertex set $V$. Define a relation $\sim'$ on $V-X$ by writing $u\sim' v$ if $u$ and $v$ are reticulations and either $(u, v)$ or $(v, u)$ is an arc of $\cN$. Let $\sim$ be the transitive closure of $\sim'$; the equivalence classes of vertices under $\sim$ are called {\em sinks}. Thus, a phylogenetic network $\cN$ is stack-free if and only if each of its sinks has size 1 (i.e.\ each reticulation forms its own equivalence class). The {\em stack \blue{identification}} of $\cN$, \Blue{denoted} $\id(\cN)$, is the phylogenetic network obtained from $\cN$ by identifying all the vertices within each sink $S$ to a single vertex $v_S$ (and removing any arcs between vertices of the same sink). \blue{Observe that $\id(\cN)$ can be obtained from $\cN$ by repeatedly deleting each stack arc and identifying its end vertices.} Note that $\id(\cN)$ is not necessarily a phylogenetic network because it may have arcs in parallel. However, if $\cN$ is orchard, then, as we show in the next section (Lemma~\ref{lemo1}), $\id(\cN)$ is also orchard. \blue{To illustrate the notion of stack \blue{identification}, consider the two orchard networks $\cN$ and $\cN'$ shown in Fig.~\ref{fig1}. The stack \blue{identifications} of $\cN$ and $\cN'$ are shown in Fig.~\ref{stack}. Observe that $\id(\cN)\cong \id(\cN')$.}

\begin{figure}
\center
\input{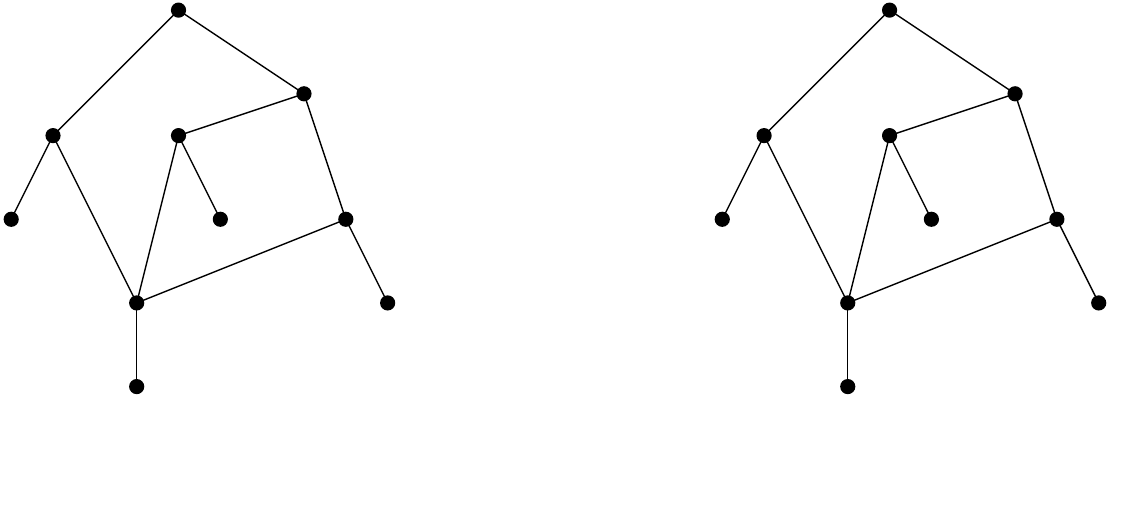_t}
\caption{The stack \blue{identifications} $\id(\cN)$ and $\id(\cN')$ of the orchard networks $\cN$ and $\cN'$, respectively, shown in Fig.~\ref{fig1}. Observe that both $\id(\cN)$ and $\id(\cN')$ are orchard networks, and  $\id(\cN)\cong \id(\cN)$.}
\label{stack}
\end{figure}

\blue{The next theorem is the second main result of the paper.}

\begin{theorem}
Let $\cN$ and $\cN'$ be orchard networks on $X$. If $\cN$ and $\cN'$ have the same ancestral profile, then $\id(\cN)\cong \id(\cN')$.
\label{main2}
\end{theorem}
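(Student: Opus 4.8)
The plan is to reduce the statement to Theorem~\ref{main1}. Since the hypothesis gives $\Sigma_{\cN}=\Sigma_{\cN'}$ (up to reordering of the non-leaf vertices), and since, by Lemma~\ref{lemo1}, both $\id(\cN)$ and $\id(\cN')$ are orchard networks---each of which is easily seen to be stack-free, because any non-leaf vertex of $\id(\cM)$ that is a reticulation has only non-reticulation parents and a non-reticulation child---Theorem~\ref{main1} applies to each of them. In particular, $\id(\cN)$ is the unique phylogenetic network on $X$ realising $\Sigma_{\id(\cN)}$. Hence it suffices to prove that $\Sigma_{\id(\cN)}=\Sigma_{\id(\cN')}$ (up to reordering): then $\id(\cN')$ is a phylogenetic network realising $\Sigma_{\id(\cN)}$, and uniqueness forces $\id(\cN)\cong\id(\cN')$. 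So the theorem follows once we establish the following profile lemma: for every orchard network $\cM$, the ancestral profile $\Sigma_{\id(\cM)}$ is determined by $\Sigma_{\cM}$.

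To prove this profile lemma I would first record two structural observations. First, if $u$ is a reticulation with unique child $w$, then every path from $u$ to a leaf begins with the arc $(u,w)$, so the entry of $u$ in each ancestral tuple equals the corresponding entry of $w$; iterating along the in-tree structure of a sink $S$ shows that all vertices of $S$ share a common column (the vector of path counts to the leaves), namely that of the bottom reticulation of $S$. Second, contracting a single stack arc $(u,v)$ by identifying its endpoints neither creates nor destroys any path to a leaf from a tree vertex, since such a path merely has the segment through $u$ and $v$ shortened; the only effect on the profile is the deletion of one of the two equal entries for $u$ and $v$. Writing $\cM^-$ for $\cM$ with one stack arc contracted (again an orchard network, with one fewer stack arc and $\id(\cM)=\id(\cM^-)$), we therefore have $\Sigma_{\cM^-}=\Sigma_{\cM}$ with one copy of the common sink-column removed. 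This sets up an induction on the number of stack arcs, with Theorem~\ref{main1} furnishing the base case where $\cM$ is stack-free and $\id(\cM)=\cM$. The profile lemma thus reduces to showing that $\Sigma_{\cM}$ determines both whether $\cM$ has a stack arc and, if so, a canonical choice of the column to be removed in passing to $\Sigma_{\cM^-}$.

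The hard part is exactly this last point: the passage from $\Sigma_{\cM}$ to $\Sigma_{\cM^-}$ must be read off from the profile alone, independently of $\cM$. One cannot simply delete duplicate columns, since a tree vertex lying immediately below a sink has the same column as the sink (being a reticulation chain whose column propagates down to its child), so a naive deduplication would over-merge. I would resolve this by exploiting the entrywise partial order on the multiset of columns together with the dichotomy that a tree vertex's column is the \emph{sum} of the columns of its two children, and so strictly dominates each of them, whereas a reticulation's column \emph{equals} that of its child. This lets one locate, from the profile, the columns that can only arise from stacked reticulations---distinguishing in particular the case in which the bottom reticulation of a sink feeds a leaf, producing a standard basis column of abnormally high multiplicity, from the case in which it feeds a tree vertex---and thereby identify a removable duplicate canonically. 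Equivalently, one re-runs the reconstruction underlying Theorem~\ref{main1}, but now only up to the order of reticulations within each sink, and verifies that this ambiguity is precisely what $\id$ collapses. Establishing that the removable column is well defined by the profile, uniformly over all orchard networks realising that profile, is the crux of the argument; once it is in hand, equal profiles yield equal stack-identified profiles, and Theorem~\ref{main1} completes the proof.
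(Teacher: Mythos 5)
Your overall strategy coincides with the paper's: show that $\id(\cN)$ and $\id(\cN')$ are stack-free orchard networks, prove that $\Sigma_{\cN}=\Sigma_{\cN'}$ forces $\Sigma_{\id(\cN)}=\Sigma_{\id(\cN')}$ by induction on the number of stack arcs (the paper's Lemma~\ref{lemo2}, using Lemma~\ref{tuple2} for the contraction step), and finish with Theorem~\ref{main1}. You have also correctly identified the crux --- deciding from the profile alone where the stacks are and how many duplicate columns to delete --- and two correct ingredients: a reticulation's column equals its unique child's column, while a tree vertex's column is the sum of its two children's columns, hence is never a standard basis vector. This is exactly the dichotomy the paper exploits in Corollary~\ref{maximal}.

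However, there is a genuine gap at that crux. Your deduplication rule presupposes that, in an orchard network, a column occurring with multiplicity at least two can arise \emph{only} from the vertices of a single sink together with (possibly) the tree-vertex child of its bottom reticulation. The dominance dichotomy does not prove this: it shows a tree vertex cannot carry a basis-vector column, but it does not exclude, say, two incomparable tree vertices sharing a column, or two distinct sinks sharing a column; if such ``accidental'' clones could occur, any multiplicity-based rule (such as: delete $m-1$ copies of a repeated basis column and $m-2$ copies of any other repeated column) would miscount. This missing fact is precisely the paper's Lemma~\ref{clones1}, the characterisation of clones, whose hard direction occupies most of Section~\ref{proof} and is proved by induction on the number of leaves plus reticulations, pushing clones through cherry reductions via Lemma~\ref{tuple1}. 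The orchard hypothesis is essential there, so the fact cannot be obtained from local column arithmetic: for instance, take a root with two tree-vertex children $u_1$ and $u_2$, let each $u_i$ be a parent of both reticulations $r_1$ and $r_2$, and let $r_i$ be the parent of leaf $x_i$; then $u_1$ and $u_2$ are clones yet are unrelated tree vertices --- this network is simply not orchard. Your closing suggestion to ``re-run the reconstruction underlying Theorem~\ref{main1} up to the order of reticulations within each sink'' is likewise only a restatement of what must be proved. In short, the scaffolding of your argument matches the paper's, but the clone characterisation --- the technical heart of the proof --- is asserted rather than established.
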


\blue{We end this section with a brief discussion concerning Theorem~\ref{main1} and its relationship with the main results in~\cite{car08, car09}.} Let $\cN$ be a phylogenetic network. We say $\cN$ is {\em tree-child} if every non-leaf vertex of $\cN$ is a parent of a tree vertex or a leaf. Furthermore, $\cN$ is {\em tree-sibling} if every reticulation has a parent that is also a parent of a tree vertex or a leaf. Also, $\cN$ is {\em time-consistent} if there is a map $t$ from the vertex set of $\cN$ to the non-negative integers having the property that if $(u, v)$ is an arc of $\cN$, then $t(u)=t(v)$ if $(u, v)$ is a reticulation arc; otherwise, $t(u)< t(v)$.

The class of stack-free orchard networks includes the class of tree-child networks as a proper subclass. \blue{(A proof that a binary tree-child network is orchard is given in~\cite{bor16}. The generalisation to allowing reticulations with in-degree more than two is straightforward.)} Moreover, although a tree-child network on \Blue{a} leaf set of size $n$ can have at most $n-1$ \Blue{reticulations}~\cite{car09}, a stack-free orchard network can have arbitrarily many \Blue{reticulations}, as indicated in Fig.~\ref{unbounded}. The class of stack-free orchard networks also includes tree-sibling time-consistent networks with no stacks. \blue{(A proof for the binary case is given in~\cite{erd19}. This proof generalises to allowing reticulations with in-degree more than two.)} Like tree-child networks, the number of reticulations of such a network \blue{on a leaf set of size $n$} is linear, in this case \blue{at most} $2n-4$~\cite{car08}.

\begin{figure}
\center
\input{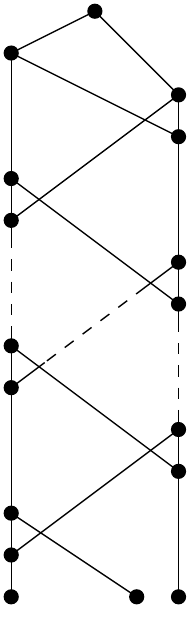_t}
\caption{An orchard network that is binary and stack-free, for which the total number of vertices is not bounded by the size of its leaf set.}
\label{unbounded}
\end{figure}

Theorem~\ref{main1} in part generalises results in~\cite{car08, car09}\footnote{The results in~\cite{car08, car09} are slightly stronger than that described here as they allow tree vertices to have out-degree at least two.}. These papers consider the classes of time-sibling time-consistent and tree-child networks, respectively, in the context of a formation on $X$ for reconstruction \blue{that is equivalent to ancestral profile}. They establish uniqueness results for tree-child~\cite[Theorem~1]{car09} and for \blue{binary} tree-sibling time-consistent networks with no stacks~\cite[Theorem~6]{car08}. However, the uniqueness is within the respective classes. \blue{Thus, for example, in our terminology, it is shown in~\cite{car09} that if $\cN$ is a tree-child network on $X$, then, up to isomorphism, $\cN$ is the unique tree-child network on $X$ realising $\Sigma_{\cN}$.} 

\section{\blue{Proof of Theorem~\ref{main2}}}
\label{proof}

The proof of Theorem~\ref{main2} makes use of a sequence of lemmas. We begin by showing that the stack \blue{identification} of an orchard network is orchard.

\begin{lemma}
\blue{Let $\cN$ be an orchard network, and let $e$ be a stack arc of $\cN$. Suppose that $\cN'$ is obtained from $\cN$ by deleting $e$ and identifying its end vertices. Then $\cN'$ is an orchard network.}
\label{lemo1}
\end{lemma}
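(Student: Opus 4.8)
The plan is to prove this by induction on the number of vertices of $\cN$, building an explicit complete cherry-reduction sequence for $\cN'$ out of one for $\cN$. Write $e=(u,v)$, where $u$ and $v$ are reticulations and $v$ is the unique child of $u$, and let $w$ denote the vertex of $\cN'$ obtained by identifying $u$ and $v$; thus $w$ is a reticulation whose in-arcs are those of $u$ together with those of $v$ other than $e$, and whose unique out-arc is the out-arc of $v$. Before anything else I would check that $\cN'$ is genuinely a phylogenetic network, the only danger being a pair of parallel arcs, which arises precisely when some vertex is a parent of both $u$ and $v$. I expect to rule this out using the orchard hypothesis: a common parent of $u$ and $v$ forces a configuration around the stack that no sequence of cherry reductions can dismantle, contradicting that $\cN$ reduces to a single vertex. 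Pinning down this obstruction is the first point I would make precise.

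Since $\cN$ is orchard and is not a single vertex (it has a stack arc), Proposition~\ref{pro:orchard} guarantees that $\cN$ admits a cherry or reticulated cherry, and hence a first reduction $\rho$ leading to an orchard network $\cN_1$ with fewer vertices. The heart of the argument is a case analysis of how $\rho$ interacts with $e$. Because $u$ is a reticulation whose only child is $v$, and the pivots of a reduction are either a tree vertex (for a cherry) or a tree-vertex/reticulation pair joined by an arc (for a reticulated cherry), one checks that $u$ is never the pivot of $\rho$ and that $e$ itself is never the arc deleted by $\rho$. Consequently there are only two possibilities: either $e$ remains a stack arc of $\cN_1$, or $\rho$ cuts a reticulated cherry whose reticulation is $v$, with $v$ of in-degree exactly two, so that suppressing $v$ destroys the stack.

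In the first case I would show that $\rho$ is also applicable to $\cN'$ (the cherry or reticulated cherry it acts on survives the identification, with $w$ playing the role of $v$ where relevant), and that the two operations commute, that is, applying $\rho$ to $\cN'$ yields exactly the network obtained from $\cN_1$ by identifying the endpoints of $e$. By the induction hypothesis that latter network is orchard, so $\cN'$ reduces to an orchard network in a single step and is therefore orchard. In the second (stack-consuming) case I would instead show that the corresponding cut is applicable to $\cN'$, now merely lowering the in-degree of $w$, and that the resulting network is isomorphic to the orchard network $\cN_1$; again $\cN'$ reduces to an orchard network in one step and so is orchard. The routine-but-delicate part throughout is the bookkeeping of which degree-two vertices are suppressed after a cut (the parent of $a$ always, the reticulation parent of $b$ only when its in-degree was two), and checking that these suppressions near $u$, $v$, and the child of $v$ agree on both sides of the identification.

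The main obstacles I anticipate are, first, the preliminary exclusion of a common parent of $u$ and $v$, ruling out parallel arcs; and second, verifying the commuting square in the stack-consuming case, where the stack disappears on the $\cN$ side but only the in-degree of $w$ drops on the $\cN'$ side. Getting the isomorphism between $\rho(\cN')$ and $\cN_1$ exactly right, including the suppression of the cut tree vertex, is where the argument must be most careful.
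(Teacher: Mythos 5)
Your main argument is essentially the paper's, just repackaged: the paper takes a complete cherry-reduction sequence of $\cN$, observes that no reduction can delete $e$ or suppress $u$ before $v$, follows the sequence to the step at which $v$ is suppressed (a cut deleting a tree arc $(u',v)$ when $v$ has in-degree two), and shows the same reductions apply to $\cN'$ with the identified network tracking along until, at that step, it becomes isomorphic to the reduced network; the remaining reductions then complete the sequence for $\cN'$. Your induction on the number of vertices with a one-step commuting square, split into the ``stack survives'' and ``stack consumed'' cases, is exactly this argument with the inner bookkeeping promoted to an outer induction, and your case analysis (only $v$, never $u$, can be the reticulation of a cut; $e$ itself is never deleted; $w$ merely loses in-degree when $v$ would have been suppressed, giving $\rho(\cN')\cong\cN_1$) is correct.

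The one genuine gap is the parallel-arcs exclusion, which you flag (``pinning down this obstruction is the first point I would make precise'') but never actually prove. This is not optional deferred detail: the cherry reductions are defined only on phylogenetic networks, so your induction cannot even be applied to $\cN'$ until it is known that $\cN'$ has no parallel arcs. The missing argument is short and uses exactly the machinery you already set up. Suppose $t$ is a common parent of $u$ and $v$; then $t$ is a tree vertex whose two children are $u$ and $v$. In any cherry-reduction sequence, an in-arc of the reticulation $v$ can be deleted only by cutting a reticulated cherry, which requires the tree-vertex end of that arc to have a leaf child at that stage. The arc $(u,v)$ can never be cut ($u$ is a reticulation), and $(t,v)$ can never be cut: the children of $t$ remain $u$ and $v$ throughout, since $v$ cannot be suppressed while its in-degree is at least two (both $(u,v)$ and $(t,v)$ being uncuttable keeps it at least two), and $u$ cannot be suppressed before $v$ because its only child $v$ is never a leaf, so no in-arc of $u$ is ever cut. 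Hence $v$ is never suppressed, no cherry-reduction sequence of $\cN$ is complete, and $\cN$ is not orchard --- a contradiction. (Equivalently, in your framework: with such a $t$ present, your stack-consuming case can never occur, so the stack survives every reduction and completeness is impossible.) With this inserted, your proof matches the paper's.
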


\begin{proof}
\blue{Let $e=(u, v)$. We first show that $\cN'$ has no parallel arcs, that is, $\cN'$ is a phylogenetic network. Assume $\cN'$ has two parallel arcs. Then, by the construction of $\cN'$, these arcs are directed out of a tree vertex $t$ and directed into the vertex, say $v'$, identifying $u$ and $v$, in which case, $(t, u)$ and $(t, v)$ are arcs of $\cN$.}
Since $\cN$ is orchard, $\cN$ has a complete cherry-reduction sequence $\cS$. \blue{Applying $\cS$ to $\cN$, this sequence eventually suppresses $u$ and $v$ via cutting a reticulated cherry. Clearly, $v$ is suppressed before $u$. Since $u$ is a reticulation parent of $v$, it follows that prior to $v$ being suppressed, $(t, v)$ is cut as part of a cherry reduction of $\cS$. But this requires $t$ to have a descendant leaf that is not a descendant of $v$. Since $u$ is the other child of $t$, there are no such leaves, and so $\cS$ is not a cherry-reduction sequence of $\cN$, a contradiction.}
Thus $\cN'$ has no parallel arcs.

We complete the proof by showing that $\cN'$ is orchard. Let
$$\cN=\cN_0, \cN_1, \cN_2, \ldots, \cN_k$$
be a complete cherry-reduction sequence for $\cN$. Since $u$ and $v$ are reticulations, and $u$ is a parent of $v$, it follows that, for some $i\in \{1, 2, \ldots, k\}$, the phylogenetic network $\cN_i$ is obtained from $\cN_{i-1}$ by cutting a reticulated cherry, and then suppressing $u'$ and $v$, where $u'$ is a parent of $v$ that is not $u$ in $\cN$. A simple induction argument shows that exactly the same sequence of cherry reductions from $\cN=\cN_0$ to $\cN_i$ can be applied to $\cN'$ to obtain the cherry-reduction sequence
$$\cN'=\cN'_0, \cN'_1, \cN'_2, \ldots, \cN'_i,$$
where, for all $j\in \{0, 1, \ldots, i-1\}$, the phylogenetic network is obtained from $\cN_j$ by \blue{deleting $(u, v)$, and identifying $u$ and $v$,} and $\cN'_i\cong \cN_i$. Using the cherry-reduction sequence from $\cN_{i+1}$ to $\cN_k$, it now follows that there exists a complete cherry-reduction sequence for $\cN'$, and so $\cN'$ is orchard.
\end{proof}

\blue{The next corollary is an immediate consequence of Lemma~\ref{lemo1}.}

\begin{corollary}
\blue{Let $\cN$ be an orchard network. Then $\id(\cN)$ is an orchard network.}
\label{stack1}
\end{corollary}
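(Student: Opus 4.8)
The plan is to derive Corollary~\ref{stack1} directly from Lemma~\ref{lemo1} by iterating the single-stack-arc operation. First I would observe that $\id(\cN)$ is, by the remark already made in the excerpt, obtainable from $\cN$ by \emph{repeatedly} deleting each stack arc and identifying its end vertices. So the natural approach is an induction on the number of stack arcs of $\cN$.

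For the base case, if $\cN$ has no stack arcs, then $\cN$ is stack-free, every sink has size~$1$, no identifications occur, and hence $\id(\cN)=\cN$, which is orchard by hypothesis. For the inductive step, suppose $\cN$ has at least one stack arc $e$. Let $\cN'$ be the phylogenetic network obtained from $\cN$ by deleting $e$ and identifying its end vertices. By Lemma~\ref{lemo1}, $\cN'$ is an orchard network. The key point I would check is that $\cN'$ has strictly fewer stack arcs than $\cN$, so that the induction hypothesis applies to $\cN'$ and yields that $\id(\cN')$ is orchard; then I would argue that $\id(\cN')\cong\id(\cN)$, because collapsing $e$ first and then collapsing the remaining sinks produces the same end vertices as collapsing all sinks at once.

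The step I expect to require the most care is confirming that the iterated single-arc contractions compute $\id(\cN)$ correctly, and that the count of stack arcs genuinely decreases. When we contract $e=(u,v)$, the two reticulations $u$ and $v$ merge into one vertex, and the stack arc $e$ itself disappears; any other stack arcs incident to $u$ or $v$ (arising from a sink of size greater than~$2$) persist but now emanate from the merged vertex, so the total number of stack arcs drops by exactly one. Moreover, the $\sim$-equivalence classes (sinks) of $\cN'$ are precisely the sinks of $\cN$ with $u$ and $v$ amalgamated, so identifying all sinks of $\cN'$ yields the same vertex set and arc set (up to isomorphism fixing $X$) as identifying all sinks of $\cN$. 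Hence $\id(\cN)\cong\id(\cN')$, and since the latter is orchard by the induction hypothesis, so is $\id(\cN)$. This completes the induction and establishes the corollary.
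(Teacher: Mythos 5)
Your proof is correct and follows the same route as the paper: the paper presents Corollary~\ref{stack1} as an immediate consequence of Lemma~\ref{lemo1}, namely the iteration of the single-stack-arc contraction that you spell out by induction on the number of stack arcs. Your additional verifications (that the stack-arc count drops by exactly one and that contracting one stack arc and then the remaining sinks yields $\id(\cN)$) are precisely the details the paper leaves implicit, and they are accurate.
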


\blue{We next} describe two operations on sets of certain ordered pairs. These operations parallel the \blue{graph} operations of reducing cherries and cutting reticulated cherries. \blue{Intuitively, these operations explicitly describe how the ancestral profile of a phylogenetic networks changes if we reduce a cherry or cut a reticulated cherry (see Lemma~\ref{tuple1}).}

Let $X$ be a non-empty set and, for some fixed non-negative integer $t$, let
$$\Sigma=\{(x, \sigma(x)): x\in X\}$$
be a set of ordered pairs, where $\sigma(x)$ is a $t$-tuple each entry of which is either a non-negative integer or it is a \blue{placeholder} symbol $-$. \blue{Note that $\Sigma$ is an abstraction of $\Sigma_{\cN}$, where $\cN$ is a phylogenetic network.} We now describe two operations on $\Sigma$ that correspond to the two cherry-reduction operations. Let $\{a, b\}$ be a $2$-element subset of $X$. The first operation corresponds to reducing $b$ when $\{a, b\}$ is a cherry. Let $j\in \{1, 2, \ldots, t\}$ such that $\sigma_j(a)=\sigma_j(b)=1$, and $\sigma_j(x)=0$ for all $x\in X-\{a, b\}$. Let $\Sigma'$ be the set of $|X-\{b\}|$ ordered pairs obtained from $\Sigma$ as follows. For all $x\in X-\{b\}$, set $\sigma'(x)$ to be the $t$-tuple whose $i$-th entry is
$$\sigma'_i(x)=
\begin{cases}
\sigma_i(x), & \mbox{if $i\neq j$;} \\

- & \mbox{if $i=j$.}
\end{cases}
$$
Set $\Sigma'=\{(x, \sigma'(x): x\in X-\{b\}\}$. We say that $\Sigma'$ has been obtained from $\Sigma$ by {\em reducing} $b$.

The second operation corresponds to cutting $\{a, b\}$, when $\{a, b\}$ is a reticulated cherry with reticulation leaf $b$. Let $j\in \{1, 2, \ldots, t\}$ be such that $\sigma_j(a)=1=\sigma_j(b)$, and $\sigma_j(x)=0$ for all $x\in X-\{a, b\}$, and let $k\in \{1, 2, \ldots, t\}$ be such that $\sigma_k(b)=1$ and $\sigma_k(x)=0$ for all $x\in X-b$. The second operation has two types\footnote{In the correspondence of cutting a reticulated cherry $\{a, b\}$, the two types depend on whether or not the parent of $b$ is suppressed when cutting $\{a, b\}$.}. First, let $\Sigma'$ be the set of $|X|$ ordered pairs obtained from $\Sigma$ as follows. For all $x\in X-\{b\}$, set $\sigma'(x)$ to be the $t$-tuple whose $i$-th entry is
$$\sigma'_i(x)=
\begin{cases}
\sigma_i(x), & \mbox{if $i\not\in \{j, k\}$;} \\

-, & \mbox{if $i\in \{j, k\}$}
\end{cases}
$$
and set $\sigma'(b)$ to the $t$-tuple whose $i$-th entry is
$$\sigma'_i(b)=
\begin{cases}
\sigma_i(b)-\sigma_i(a), & \mbox{if $i\not\in \{j, k\}$;} \\

-, & \mbox{if $i\in \{j, k\}$.}
\end{cases}
$$
Set $\Sigma'=\{(x, \sigma'(x)): x\in X\}$. We say that $\Sigma'$ has been obtained from $\Sigma$ by {\em Type-I cutting} $\{a, b\}$.

Now let $\Sigma''$ be the set of $|X|$ ordered pairs obtained from $\Sigma$ as follows. For all $x\in X-\{b\}$, set $\sigma''(x)$ to be the $t$-tuple whose $i$-th entry is
$$\sigma''(x)=
\begin{cases}
\sigma_i(x), & \mbox{if $i\neq j$;} \\
-, & \mbox{if $i=j$}
\end{cases}$$
and set $\sigma''(b)$ to be the $t$-tuple whose $i$-th entry is
$$\sigma''(b)=
\begin{cases}
\sigma_i(b)-\sigma_i(a), & \mbox{if $i\neq j$;} \\
-, & \mbox{if $i=j$.}
\end{cases}$$
Set $\Sigma''=\{(x, \sigma''(x)): x\in X\}$. We say that $\Sigma''$ has been obtained from $\Sigma$ by {\em Type-II cutting} $\{a, b\}$.

The next lemma is established in~\cite[Lemma~5.1]{erd19} for binary phylogenetic networks. The extension to phylogenetic networks \blue{in which reticulations have in-degree at least two} is straightforward and omitted.

\begin{lemma}
Let $\cN$ be a phylogenetic network on $X$ with vertex set $V$ and $|X|\ge 2$, and fix an ordering of $V-X$. Let $\{a, b\}$ be a $2$-element subset of $X$.
\begin{enumerate}[{\rm (i)}]
\item If $\{a, b\}$ is a cherry of $\cN$, then, up to entries with symbol $-$, the set of ordered pairs obtained from $\Sigma_{\cN}$ by reducing $b$ is the ancestral profile of a phylogenetic network isomorphic to the phylogenetic network obtained from $\cN$ by reducing $b$.

\item Suppose that $\{a, b\}$ is a reticulated cherry of $\cN$ with reticulation leaf $b$. Then, up to entries with symbol $-$, the set of ordered pairs obtained from $\Sigma_{\cN}$ by
\begin{enumerate}[{\rm (I)}]
\item Type-I cutting $\{a, b\}$ is the ancestral profile of a phylogenetic network isomorphic to the phylogenetic network $\cN'$ obtained from $\cN$ by cutting $\{a, b\}$ in which the parent of $b$ is suppressed, and

\item Type-II cutting $\{a, b\}$ is the ancestral profile of a phylogenetic network isomorphic to the phylogenetic network $\cN$ obtained from $\cN$ by cutting $\{a, b\}$ in which the parent of $b$ is not suppressed.
\end{enumerate}
\end{enumerate}
\label{tuple1}
\end{lemma}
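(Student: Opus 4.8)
The plan is to argue directly by counting directed paths, verifying in each case that the combinatorial tuple operation reproduces exactly the change in path counts induced by the corresponding graph operation. Throughout, I would fix the ordering $v_1, \ldots, v_t$ of $V-X$ and recall that $\sigma_i(x)$ is the number of paths from $v_i$ to $x$ in $\cN$. The backbone of every case is the same: first identify the distinguished indices ($j$, and in part~(ii) also $k$) appearing in the tuple operation with the structural vertices of the (reticulated) cherry; then show that deleting the relevant arc alters the path counts in precisely the prescribed way; and finally check that suppressing a vertex of in-degree one and out-degree one leaves all remaining path counts unchanged, so that the only effect on the profile is to delete (i.e.\ replace by $-$) the coordinate of the suppressed vertex.

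For part~(i), let $p$ be the common parent of the cherry $\{a,b\}$. Since $p$ is a tree vertex whose two children $a$ and $b$ are both leaves, $p$ has exactly one path to each of $a$ and $b$ and no path to any other leaf; hence the index $j$ in the tuple operation is precisely the coordinate of $p$. Reducing $b$ deletes $b$ and suppresses $p$. I would observe that for every $v_i$ with $i\neq j$ and every leaf $x\neq b$, each path from $v_i$ to $x$ is unaffected by this operation (a path reaching $a$ through $p$ simply replaces the two arcs at $p$ by the merged arc), giving a path-count-preserving bijection. Thus the resulting profile agrees with $\Sigma_{\cN}$ in every coordinate except $j$, which is discarded, which is exactly the output of reducing $b$ on $\Sigma_{\cN}$.

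For part~(ii), write $p_a$ for the tree-vertex parent of $a$ and $p_b$ for the reticulation parent of $b$, so $(p_a,p_b)$ is the arc being cut. As in part~(i), $p_a$ has a unique path to $a$ and, since $p_b$ has out-degree one with unique child $b$, a unique path $p_a\to p_b\to b$ to $b$ and to no other leaf; this identifies $j$ with $p_a$. Likewise $p_b$ reaches only $b$, and does so by a single path, identifying $k$ with $p_b$. The crucial step is the following bijection: for each $v_i$, the paths from $v_i$ to $b$ that traverse the arc $(p_a,p_b)$ are exactly the paths $v_i\rightsquigarrow p_a$ each extended by $p_a\to p_b\to b$, and paths $v_i\rightsquigarrow p_a$ biject with paths $v_i\rightsquigarrow a$ since $a$ has $p_a$ as its unique parent. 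Hence deleting $(p_a,p_b)$ reduces the number of paths from $v_i$ to $b$ by exactly $\sigma_i(a)$, while the count for every other leaf is unchanged because $b$ is the only descendant leaf of $p_b$. The two types then differ only in whether $p_b$ is suppressed: when $p_b$ has in-degree two it is suppressed, both coordinates $j$ and $k$ are deleted, and the resulting profile matches Type-I cutting; when $p_b$ has in-degree at least three it survives as a reticulation, only $p_a$ is suppressed, coordinate $k$ is retained (consistently, $\sigma_k(b)-\sigma_k(a)=1-0=1$ is still the number of paths from $p_b$ to $b$), and the profile matches Type-II cutting.

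I expect the main obstacle to be the path-counting bijection in part~(ii), specifically verifying that every path from $v_i$ to $b$ either avoids $(p_a,p_b)$ entirely or factors uniquely as a path to $p_a$ followed by $p_a\to p_b\to b$, and that no path to a leaf other than $b$ is disturbed. Once this is established, the suppression steps are routine, and the bookkeeping of which coordinates are replaced by $-$ follows immediately. Since this reasoning is insensitive to the in-degrees of reticulations except at the single point distinguishing Type-I from Type-II, it reproduces the binary argument of~\cite[Lemma~5.1]{erd19} and extends it to reticulations of arbitrary in-degree, which is why the full verification can reasonably be omitted.
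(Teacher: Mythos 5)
Your underlying path-counting argument is correct and is essentially the argument the paper leaves implicit: the paper gives no proof of this lemma at all, citing \cite[Lemma~5.1]{erd19} for the binary case and asserting that the extension to higher in-degree is straightforward. Your key step in part (ii) --- every path from $v_i$ to $b$ either avoids $(p_a,p_b)$ or factors uniquely as a path to $p_a$ followed by $p_a\to p_b\to b$, paths to $p_a$ biject with paths to $a$, and no path to any leaf other than $b$ can use $(p_a,p_b)$ because $p_b$ has out-degree one --- is exactly the right bijection, and your treatment of the Type-I/Type-II split according to whether $p_b$ has in-degree two or at least three matches the definition of cutting.

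There is, however, a genuine gap: you claim the distinguished indices are forced, i.e.\ that $j$ ``is precisely the coordinate of $p$'' in part (i), and that $j,k$ are ``identified'' with $p_a,p_b$ in part (ii). The tuple operations are defined relative to \emph{any} index satisfying the stated numerical conditions, and such an index need not be unique. For instance, a reticulation $r$ whose only child is $p$ (respectively $p_a$) also satisfies $\sigma_r(a)=\sigma_r(b)=1$ and $\sigma_r(x)=0$ otherwise --- and this configuration is not a stack, so it occurs even in stack-free networks --- while in part (ii) any reticulation stacked above $p_b$ satisfies the condition defining $k$; this ambiguity between members of a stack is the very phenomenon behind Fig.~\ref{fig1}. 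Since the lemma is later applied (in Lemma~\ref{clones1} and Lemma~\ref{lemo2}) to orchard networks that may contain stacks, and is applied to a bare profile in which one cannot tell which qualifying index corresponds to the actual cherry parent, the lemma must hold for an arbitrary admissible choice of $j$ (and $k$), whereas your proof as written covers only the canonical choice. The repair is short and should be stated: any vertex $u$ qualifying as $j$ (or as $k$) has, by the defining numerical conditions, exactly the same column as the canonical vertex, so the two are clones; and one then checks that after the graph operation the surviving member of this clone pair has precisely the column that the tuple operation assigns to the surviving coordinate (for the $k$-case this uses the observation that the unique path from such a $u$ to $b$ cannot traverse $(p_a,p_b)$, since otherwise $\sigma_u(a)\ge 1$). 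Hence the resulting multiset of columns is independent of which admissible index is deleted, and the conclusion holds for every choice.
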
 

Let $\cN$ be a phylogenetic network on $X$ with vertex set $V$, and let $v_1, v_2, \ldots, v_t$ be a fixed labelling of the vertices in $V-X$. For distinct $i, j\in \{1, 2, \ldots, t\}$, we say $v_i$ and $v_j$ are {\em clones} if, for all $x\in X$, we have $\sigma_i(x)=\sigma_j(x)$. \Blue{Characterising which pairs of vertices in an orchard network are clones is crucial to establishing Theorem~\ref{main2}. The next lemma gives this characterisation.}

\begin{lemma}
Let $\cN$ be an orchard network on $X$ with vertex set $V$. Let $v_1, v_2, \ldots, v_t$ be a fixed labelling of the vertices of $V-X$. Then $v_i$ and $v_j$ are clones if and only if one of the following holds:
\begin{enumerate}[{\rm (i)}]
\item $v_i$ and $v_j$ belong to the same sink of $\cN$; or

\item exactly one of $v_i$ and $v_j$ is a reticulation, say $v_i$, and there is a reticulation $v_k$ in the same sink of $\cN$ as $v_i$ such that $(v_k, v_j)$ is a (tree) arc of $\cN$.
\end{enumerate}
\label{clones1}
\end{lemma}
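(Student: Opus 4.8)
The plan is to build everything on a single structural fact: if $r$ is a reticulation with unique child $c$, then every path from $r$ to a leaf begins with the forced arc $(r,c)$, so $\sigma(r)=\sigma(c)$; that is, a reticulation is always a clone of its child. For the $(\Leftarrow)$ direction I would first record the shape of a sink $S$. Since every reticulation has out-degree one, the arcs between members of $S$ form a tree directed towards a unique vertex $w^\ast$ (the only member of $S$ whose out-arc leaves $S$), and from any $u\in S$ there is a unique forced directed path to $w^\ast$. Iterating the basic fact along this path gives $\sigma(u)=\sigma(w^\ast)$ for all $u\in S$, which settles case (i). For case (ii), the reticulation $v_k\in S$ with $(v_k,v_j)$ a tree arc must actually be $w^\ast$: here $v_j$ is necessarily a tree vertex and hence not in $S$, so $v_k$ is the member of $S$ whose out-arc leaves $S$. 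Then $\sigma(v_j)=\sigma(w^\ast)=\sigma(v_i)$ by the basic fact again, so $v_i$ and $v_j$ are clones.

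For the $(\Rightarrow)$ direction I would split on whether $v_i$ and $v_j$ are comparable. Suppose first that $v_i$ is a proper ancestor of $v_j$ and that they are clones. Writing $P$ for the number of paths from $v_i$ to $v_j$, a path-count identity gives, for each leaf $x$, $\sigma_i(x)=N_x+P\,\sigma_j(x)$, where $N_x$ is the number of paths from $v_i$ to $x$ avoiding $v_j$; setting this equal to $\sigma_j(x)$ forces $P\le 1$, hence $P=1$, and moreover every path from $v_i$ to a leaf passes through $v_j$. If $v_i$ were a tree vertex, both of its children would reach $v_j$, giving at least two paths from $v_i$ to $v_j$, a contradiction; so $v_i$ is a reticulation. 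By the basic fact its child is again a clone of $v_j$ and lies on the unique $v_i$--$v_j$ path, and descending shows that every vertex strictly above $v_j$ on this path is a reticulation. Thus $v_i$ and the parent of $v_j$ on the path lie in a common sink, and we land in case (i) (if $v_j$ is a reticulation) or case (ii) (if $v_j$ is a tree vertex, with $v_k$ its reticulation parent).

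The genuinely hard case is when $v_i$ and $v_j$ are incomparable clones, and here the orchard hypothesis is essential: the network with root $\rho$, tree vertices $v_i,v_j$ each having the two reticulations $w_a,w_b$ as children, together with arcs $w_a\to x_1$ and $w_b\to x_2$, makes $v_i,v_j$ incomparable clones satisfying neither (i) nor (ii); but this network has no cherry or reticulated cherry and so is not orchard. The plan is to reduce the incomparable case to the comparable one. First, an incomparable pair of clones can never be a cherry-parent: the parent of a leaf $x$ is the unique vertex through which every path to $x$ passes, so if $v_i$ were such a parent then $v_j$, which also reaches $x$, would reach $v_i$, contradicting incomparability. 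This licenses an induction on $|V|$: by Proposition~\ref{pro:orchard} the orchard network $\cN$ admits a cherry reduction to a smaller orchard network $\cN^-$ (again orchard by Proposition~\ref{pro:orchard}), and by Lemma~\ref{tuple1} the reduction keeps the tuples of two surviving vertices equal when they were equal in $\cN$; since $v_i,v_j$ are never the suppressed cherry-parent, they survive and remain clones in $\cN^-$.

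The crux is the lifting step: I must show that the conclusion (i)/(ii) for $v_i,v_j$ in $\cN^-$ pulls back to $\cN$, i.e.\ that the reduction can be arranged not to disturb the sink structure relevant to $v_i$ and $v_j$, and that cherry reductions neither create nor destroy the pertinent sink relations. I expect this bookkeeping---tracking how sinks, and the ``child of the bottom of a sink'' relation, behave under cutting a reticulated cherry whose parent reticulation may or may not be suppressed---to be the main obstacle, and the point at which the cascade of reductions guaranteed by Proposition~\ref{pro:orchard} does the essential work of excluding symmetric double-merge configurations like the one above. A direct alternative would be to show that the common descendants reached from $v_i$ and $v_j$ by equal numbers of paths have a highest element $w$, that $w$ must be a reticulation, and that $\sigma(w)=\sigma(v_i)$, after which the already-proved comparable case applied to $(v_i,w)$ and $(v_j,w)$ places both in the sink of $w$; but establishing $\sigma(w)=\sigma(v_i)$---equivalently, that all leaf-paths from $v_i$ and $v_j$ funnel through a single common reticulation---is exactly where orchardness must be invoked, and is the main difficulty either way.
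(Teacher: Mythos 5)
Your proposal stalls at exactly the point you yourself flag as the crux: the lifting step in the incomparable case is never carried out, only predicted to be ``bookkeeping.'' Everything before that point is correct --- the $(\Leftarrow)$ direction via the forced path to the bottom vertex $w^\ast$ of a sink, the comparable case via the identity $\sigma_i(x)=N_x+P\,\sigma_j(x)$ (which, as you note, needs no orchard hypothesis), and the observation that an incomparable clone is never a parent of a cherry or reticulated-cherry leaf and hence survives any cherry reduction as a clone. But the induction is left hanging where (i)/(ii) in $\cN^-$ must be pulled back to $\cN$, so what you have is not yet a proof. You also misdiagnose where orchardness acts: the pull-back itself is genuinely routine (a cherry reduction changes no reticulation adjacencies; cutting a reticulated cherry at worst deletes the suppressed parent $p_b$ from its sink, and every newly created arc ends at a leaf, so no sink relation or arc $(v_k,v_j)$ among surviving non-leaf vertices is created or destroyed). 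Orchardness is needed only to guarantee that a cherry or reticulated cherry \emph{exists} at every stage and that each reduced network is again orchard (Proposition~\ref{pro:orchard}); your double-merge example is excluded at that existence step --- it has no cherry at all --- not at the lifting step. For comparison, the paper proves the lemma by a single induction on the number of leaves plus reticulations, treats the pull-back as immediate, and handles the cases where $v_i$ or $v_j$ \emph{is} one of the suppressed parents by direct contradiction; your split into comparable/incomparable cases is a different organization of the same inductive engine.

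It is worth noting that the ingredients you already have yield a cleaner finish than the lift you propose. You prove (a) incomparable clones are never the suppressed parents, so they survive every cherry reduction and remain clones (Lemma~\ref{tuple1}); and it is easy to add (b) reductions only delete arcs and suppress degree-two vertices, so they never create a directed path between surviving vertices --- an incomparable pair stays incomparable. Since each reduced network is again orchard, Proposition~\ref{pro:orchard} gives a complete reduction sequence ending in a single vertex, which has no non-leaf vertices. If $\cN$ contained an incomparable pair of clones, (a) and (b) would force that pair to survive, intact and incomparable, through the entire complete sequence --- impossible. Hence in an orchard network every pair of clones is comparable, and your comparable-case argument finishes the lemma outright, with no sink-tracking lift needed at all. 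I recommend restructuring your write-up along these lines.
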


\blue{Before establishing Lemma~\ref{clones1}, we give an illustration of the lemma. Consider the orchard network shown in Fig.~\ref{orchard}. Every pair of vertices in $\{u, v, w\}$ are clones. Vertices $u$ and $v$ satisfy (i) of Lemma~\ref{clones1}, while vertices $u$ and $w$ (as well as $v$ and $w$) satisfy (ii) of Lemma~\ref{clones1}.}

\begin{proof}[Proof of Lemma~\ref{clones1}]
It is easily seen that if $v_i$ and $v_j$ are vertices for which either (i) or (ii) holds, then $v_i$ and $v_j$ are clones. For the converse, suppose that $v_i$ and $v_j$ are clones. The proof of the converse is by induction on the sum of the number $n$ of leaves and the number $r$ of reticulations of $\cN$. If $n+r=1$, then $n=1$ and $r=0$, and $\cN$ consists of a single vertex, and so the converse holds. If $n+r=2$, then, as $\cN$ is orchard, $n=2$ and $r=0$, and $\cN$ consists of two leaves adjoined to the root of $\cN$. Again, the converse holds.

Now assume that $n+r\ge 3$, so $n\ge 2$ \Blue{as $\cN$ is orchard}, and that the converse holds for all orchard networks in which the sum of the number of leaves and the number of reticulations is at most $n+r-1$. Since $\cN$ is orchard, $\cN$ has a $2$-element subset $\{a, b\}$ of $X$ such that $\{a, b\}$ is either a cherry or a reticulated cherry of $\cN$. First suppose that $\{a, b\}$ is a cherry of $\cN$. Let $p_a$ denote the common parent of $a$ and $b$. Let $\cN'$ denote the \blue{phylogenetic} network obtained from $\cN$ by reducing $b$. \blue{By Proposition~\ref{pro:orchard}, $\cN'$ is orchard.} Note that $\cN'$ has the same number of reticulations as $\cN$ but one less leaf. If $p_a\not\in \{v_i, v_j\}$, then, as $v_i$ and $v_j$ are clones of $\cN$, it follows by Lemma~\ref{tuple1}(i) that $v_i$ and $v_j$ are clones of $\cN'$. Thus, by induction, either (i) or (ii) holds in $\cN'$. In turn, this implies that either (i) or (ii) holds in $\cN$. Hence, without loss of generality, we may assume that $p_a=v_j$. Let $g_a$ be the (unique) parent of $p_a$ in $\cN$. If $g_a$ is a tree vertex, then there is a directed path from $g_a$ to a leaf $\ell$ such that $\ell\not\in \{a, b\}$. Since
$$\sigma_i(a)=\sigma_j(a)=1,$$
\Blue{that is, there is a path from $v_i$ to $a$ in $\cN$}, it follows that there is a path from $v_i$ to $g_a$, and so $\sigma_i(\ell)\ge 1$. But $\sigma_j(\ell)=0$, a contradiction as $v_i$ and $v_j$ are clones. Hence $g_a$ is a reticulation.

If $v_i$ belongs to the  sink $[g_a]$, then (ii) holds as $(g_a, v_j)$ is an arc of $\cN$. So assume that $v_i$ does not belong to the sink $[g_a]$. Since $\sigma_i(a)=\sigma_j(a)=1$, there is a path $P$ in $\cN$ from $v_i$ to $g_a$. Let $u$ denote the last tree vertex on $P$. Since $v_i\not\in [g_a]$, such a vertex exists and is the parent of a vertex in $[g_a]$. Then, as $u$ is a tree vertex, either there are at least two paths from $u$ to $a$ and so $\sigma_i(a)\ge 2$, a contradiction as $\sigma_j(a)=1$, or there is a path from $u$ to a leaf $\ell\not\in \{a, b\}$. But then $\sigma_i(\ell)\ge 1$ and $\sigma_j(\ell)=0$, another contradiction as $v_i$ and $v_j$ are clones.

Now suppose that $\{a, b\}$ is a reticulated cherry of $\cN$. Without loss of generality, we may assume that $b$ is the reticulation leaf. Let $p_a$ and $p_b$ denote the parents of $a$ and $b$, respectively. Let $\cN'$ be the \blue{phylogenetic} network obtained from $\cN$ by cutting $\{a, b\}$. \blue{By Proposition~\ref{pro:orchard}, $\cN'$ is orchard.} Note that $\cN'$ has the same number of leaves as $\cN$ but one less reticulation. If $\{p_a, p_b\}\cap \{v_i, v_j\}$ is empty, then, as $v_i$ and $v_j$ are clones of $\cN$, it follows by Lemma~\ref{tuple1}(ii) that $v_i$ and $v_j$ are clones of $\cN'$. Thus, by induction, either (i) or (ii) holds in $\cN'$, and therefore in $\cN$. If $\{p_a, p_b\}=\{v_i, v_j\}$, then either $\sigma_i(a)=1$ and $\sigma_j(a)=0$, or $\sigma_j(a)=1$ and $\sigma_i(a)=0$, a contradiction as $v_i$ and $v_j$ are clones. Thus we may assume that
$$|\{p_a, p_b\}\cap \{v_i, v_j\}|=1.$$

Without loss of generality, suppose that $v_j\in \{p_a, p_b\}$. Say $v_j=p_a$. Let $g_a$ be the (unique) parent of $p_a$ in $\cN$. Since $\sigma_i(a)=\sigma_j(a)=1$, it follows that there is a path from $v_i$ to $g_a$. If $g_a$ is a tree vertex, then there is a directed path from $g_a$ to a leaf $\ell$ not using $(g_a, p_a)$. Observe that $\ell\neq a$. If $\ell=b$, then $\sigma_i(b)\ge 2$, a contradiction as $\sigma_j(b)=1$. On the other hand, if $\ell\neq b$, then $\sigma_i(\ell)\ge 1$ and $\sigma_j(\ell)=0$, another contradiction. Thus we may assume $v_j=p_b$.

If $v_i$ belongs to the same sink $[p_b]$, then (i) holds. So assume $v_i$ does not belong to $[p_b]$. Since $\sigma_j(a)=0$, and $v_i$ and $v_j$ are clones, it follows that there is no path from $v_i$ to $p_a$. \Blue{However}, as $\sigma_j(b)=1$, there is a path $P$ from $v_i$ to $p_b$. Let $u$ denote the last tree vertex on $P$. Since $v_i\not\in [p_b]$ such a vertex exists and is the parent of a vertex in $[p_b]$. But then, as $u$ is a tree vertex, either there are at least two paths from $u$ to $b$, in which case $\sigma_i(b)\ge 2$, or there is a path from $u$ to a leaf $\ell\neq b$, in which case $\sigma_i(\ell)\ge 1$. Both cases contradict that $v_i$ and $v_j$ are clones as $\sigma_j(b)=1$ and $\sigma_j(x)=0$ for all $x\in X-\{b\}$. This completes the proof of the lemma.
\end{proof}
 
The next two results are consequences of Lemma~\ref{clones1}. The first result is immediate.

\begin{corollary}
Let $\cN$ be an \Blue{orchard} network on $X$ with vertex set $V$, and suppose that $\{v_1, v_2, v_3\}$ is a $3$-element subset of $V-X$. \green{If} $v_i$ and $v_j$ are clones for all distinct $i, j\in \{1, 2, 3\}$, \green{then} $\cN$ has a sink of size at least two, in which case at least two of the vertices in $\{v_1, v_2, v_3\}$ are in the same sink.
\label{clones2}
\end{corollary}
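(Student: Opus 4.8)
The plan is to apply Lemma~\ref{clones1} to each of the three pairs drawn from $\{v_1, v_2, v_3\}$ and then to close the argument with a short count of how many of these three vertices are reticulations. Since $v_i$ and $v_j$ are clones for all distinct $i, j \in \{1, 2, 3\}$, Lemma~\ref{clones1} guarantees that every pair $\{v_i, v_j\}$ satisfies condition~(i) or condition~(ii) of that lemma.

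First I would dispatch the favourable case. If condition~(i) holds for some pair, say $\{v_1, v_2\}$, then $v_1$ and $v_2$ lie in a common sink $S$; this sink then has size at least two and already contains two of $v_1, v_2, v_3$, so both conclusions of the corollary hold at once.

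It therefore suffices to show that condition~(ii) cannot hold simultaneously for all three pairs. The only feature of condition~(ii) I would use is that it forces exactly one member of the pair to be a reticulation. Writing $r$ for the number of reticulations among $v_1, v_2, v_3$, I would check the possible values of $r$: if $r \in \{0, 3\}$ then both members of every pair have the same type, so no pair has exactly one reticulation; if $r = 1$ then the pair formed by the two non-reticulations has none; and if $r = 2$ then the pair formed by the two reticulations has two. In each case some pair fails the `exactly one reticulation' requirement, so condition~(ii) cannot hold for that pair. Hence condition~(i) must hold for at least one pair, and the corollary follows from the favourable case treated above.

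I do not anticipate any real obstacle, which is consistent with the claim that the result is immediate: all the genuine work is already carried out in Lemma~\ref{clones1}, and what remains is the parity check on $r$. The one point worth stating carefully is that the argument draws on condition~(ii) only through its `exactly one reticulation' clause, so it is unaffected by whether the witnessing reticulation $v_k$ appearing in~(ii) equals $v_i$ or is distinct from it.
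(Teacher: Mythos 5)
Your proof is correct and matches the paper's (implicit) reasoning: the paper states this corollary as an immediate consequence of Lemma~\ref{clones1} without giving details, and your case analysis on the number of reticulations among $\{v_1,v_2,v_3\}$ is exactly the counting argument that makes it immediate, since condition~(ii) forces each pair to contain exactly one reticulation, which no value of $r$ can satisfy for all three pairs simultaneously. Your closing remark is also apt: the argument needs nothing from~(ii) beyond its ``exactly one reticulation'' clause, so the identity of the witnessing vertex $v_k$ is irrelevant.
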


\Blue{Next let} $\cN$ be a phylogenetic network on $X$ with vertex set $V$, and let $v_i$ and $v_j$ be distinct vertices of $V-X$. We say $v_i$ and $v_j$ \Blue{are} a {\em maximal} pair of clones if $v_i$ and $v_j$ are clones, but there is no vertex $v_k\in V-(X\cup \{v_i, v_j\})$ such that every two elements in $\{v_i, v_j, v_k\}$ are clones.

\begin{corollary}
Let $\cN$ and $\cN'$ be orchard networks on $X$, and suppose $\Sigma_{\cN}=\Sigma_{\cN'}$. If $v_i$ and $v_j$ are a maximal pair of clones of $\cN$, then $v_i$ and $v_j$ are reticulations of $\cN$ if and only if $v_i$ and $v_j$ are reticulations of $\cN'$.
\label{maximal}
\end{corollary}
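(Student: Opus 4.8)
The plan is to isolate from the shared ancestral profile a single numerical invariant that distinguishes the two possible ``shapes'' a maximal pair of clones can take inside an orchard network, and then to observe that this invariant is literally the same number in $\cN$ and in $\cN'$. First I would record that the clone relation, and hence the notion of a maximal pair of clones, depends only on $\Sigma_{\cN}$: with the labelling matched so that $\Sigma_{\cN}=\Sigma_{\cN'}$, we have $\sigma_i(x)=\sigma_j(x)$ computed in $\cN$ if and only if the same holds in $\cN'$, for every $x\in X$. Thus $v_i,v_j$ are clones of $\cN$ if and only if they are clones of $\cN'$, and $\{v_i,v_j\}$ is a maximal pair of clones of $\cN$ if and only if it is one of $\cN'$. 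In particular the common tuple $\tau=(\sigma_i(x))_{x\in X}$, and therefore the quantity $s=\sum_{x\in X}\sigma_i(x)$, is identical whether read off from $\cN$ or from $\cN'$.

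The heart of the argument is the following claim, proved inside a single orchard network: if $\{v_i,v_j\}$ is a maximal pair of clones, then $v_i$ and $v_j$ are both reticulations if and only if $s=1$. By Lemma~\ref{clones1} a clone pair falls into exactly one of two cases. In case (i) both $v_i$ and $v_j$ are reticulations lying in a common sink $S$. Here I would use that the out-arcs inside a sink funnel to a unique vertex whose out-arc leaves $S$, so that every vertex of $S$ has a single directed path to this ``exit'' and the common tuple of $S$ is exactly the path-count tuple of the exit. Invoking maximality together with Corollary~\ref{clones2} (or Lemma~\ref{clones1} directly), I would force $S=\{v_i,v_j\}$ and force the exit's out-arc to run to a leaf: a third reticulation in $S$, or a tree-vertex child of the exit, would be mutually cloned with both $v_i$ and $v_j$, contradicting maximality. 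Since the exit is then a reticulation of out-degree one whose only child is a leaf, its path-count tuple is a single indicator, giving $s=1$.

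In case (ii) exactly one of the pair, say $v_i$, is a reticulation, and $v_j$ is the tree-vertex child of the exit of the sink of $v_i$; maximality forces that sink to equal $\{v_i\}$, since a second reticulation in it would again yield a third mutual clone. Then $v_j$ is a tree vertex of out-degree two, each of whose children reaches at least one leaf, so $s=\sum_{x\in X}\sigma_j(x)\ge 2$. This establishes the claim, and applying it in $\cN$ and in $\cN'$ with the common value of $s$ finishes the proof: $v_i,v_j$ are reticulations of $\cN$ iff $s=1$ iff they are reticulations of $\cN'$. The main obstacle is the structural analysis in case (i) — identifying the unique exit of a sink and squeezing the sink down to two vertices with a leaf child — because this is precisely what pins $s$ to the value $1$ exactly in the both-reticulations situation; the bound $s\ge 2$ in case (ii) is comparatively routine once the sink has been reduced to a single reticulation.
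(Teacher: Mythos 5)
Your proof is correct and follows essentially the same route as the paper's: both arguments use Lemma~\ref{clones1} together with maximality to force a both-reticulation clone pair to sit atop a leaf (so the common tuple is a single indicator, your $s=1$), and to show that in the mixed case the tree vertex yields either two paths to one leaf or paths to two distinct leaves (your $s\ge 2$), with the conclusion transferred between $\cN$ and $\cN'$ via the shared profile. The only differences are presentational: the paper phrases the dichotomy as an asymmetric contradiction in $\cN'$ rather than as your symmetric numerical invariant, and it asserts the sink/leaf-child structure in one line where you spell out the in-tree ``exit'' analysis.
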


\begin{proof}
Since $\Sigma_{\cN}=\Sigma_{\cN'}$, the vertices $v_i$ and $v_j$ are a maximal pair of clones of $\cN'$. Thus, to prove the lemma, it suffices to show that if $v_i$ and $v_j$ are reticulations of $\cN$, then $v_i$ and $v_j$ are reticulations of $\cN'$. \Blue{Suppose that} $v_i$ and $v_j$ are reticulations of $\cN$. Then, as $v_i$ and $v_j$ \Blue{are} a maximal pair of clones, we may assume that $(v_i, v_j)$ is a stack arc of $\cN$ and $v_j$ is the parent of a leaf $\ell$. In particular, $\sigma_i(\ell)=\sigma_j(\ell)=1$ and $\sigma_i(x)=\sigma_j(x)=0$ for all $x\in X-\{\ell\}$. Now, if $v_i$ and $v_j$ are not reticulations of $\cN'$, then, without loss of generality, we may assume by Lemma~\ref{clones1} that $(v_i, v_j)$ is an arc of $\cN'$, in which $v_i$ is a reticulation and $v_j$ is a tree vertex. But then either there are at least two paths from $v_i$ to a leaf or there is a path from $v_i$ to a leaf that is not $\ell$. Both possibilities contradict the assumption that $\Sigma_{\cN}=\Sigma_{\cN'}$. This completes the proof of the corollary.
\end{proof}

Let $X$ be a non-empty finite set and, for some fixed integer $t$, let
$$\Sigma=\{(x, \sigma(x)): x\in X\}$$
be a set of ordered pairs, where $\Sigma(x)$ is a $t$-tuple whose entries are either non-negative integers or $-$ for all $x\in X$. We describe a further operation on $\Sigma$. This time the operation corresponds to \blue{deleting a stack arc and identifying its end vertices}. Let $j$, $k$, and $l$ be distinct element in $\{1, 2, \ldots, t\}$ such that
$$\sigma_j(x)=\sigma_k(x)=\sigma_l(x)$$
for all $x\in X$. Let $\Sigma'$ be the set of $|X|$ ordered pairs obtained from $\Sigma$ as follows. For all $x\in X$, set $\sigma'(x)$ to be the $t$-tuple whose $i$-th entry is
$$\sigma'_i(x)=
\begin{cases}
\sigma_i(x), & \mbox{if $i\neq j$;} \\
-, & \mbox{if $i=j$.}
\end{cases}$$
Set $\Sigma'=\{(x, \sigma'(x)): x\in X\}$. We say that $\Sigma'$ has been obtained from $\Sigma$ by \blue{{\em identifying}} $j$. The proof of the next lemma is routine and omitted.

\begin{lemma}
Let $\cN$ be a phylogenetic network on $X$ with vertex set $V$, and fix an ordering of $V-X$. Suppose that $(v_j, v_k)$ is a stack arc of $\cN$. Then, up to entries with symbol $-$, the set of ordered pairs obtained from $\Sigma_{\cN}$ by \blue{identifying} $j$ is the ancestral profile of a phylogenetic network isomorphic to the phylogenetic network obtained from $\cN$ by \blue{deleting} $(v_j, v_k)$, \blue{and identifying $v_j$ and $v_k$}.
\label{tuple2}
\end{lemma}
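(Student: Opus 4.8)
The plan is to realise the abstract operation of identifying $j$ as a direct path count in the graph $\cN'$ obtained from $\cN$ by deleting the stack arc $(v_j,v_k)$ and identifying its endpoints into a single vertex $v_{jk}$. First I would record the local structure. Since $v_j$ is a reticulation it has out-degree one, so $v_k$ is its only child; consequently every directed path leaving $v_j$ passes through $v_k$, giving $\sigma_j(x)=\sigma_k(x)$ for all $x\in X$. This confirms that $v_j$ and $v_k$ are clones, so the coordinate $j$ that gets blanked genuinely repeats coordinate $k$; legality of identifying $j$ (the existence of a third equal coordinate) is supplied by Lemma~\ref{clones1}, since the unique child $w$ of the bottom reticulation of the sink containing $v_j$ and $v_k$ is itself a clone whenever it is a non-leaf vertex. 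I would also note that in $\cN'$ the merged vertex $v_{jk}$ is again a reticulation, with in-degree $\deg^-(v_j)+\deg^-(v_k)-1$ and with the same unique child $w$ as $v_k$.

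The core of the argument is a path-counting comparison, which I would organise around the merged vertex. For the coordinate indexed by $k$ (which now represents $v_{jk}$) the count is immediate: $v_{jk}$ has the single out-arc to $w$, so the number of paths from $v_{jk}$ to a leaf $x$ in $\cN'$ equals the number of paths from $w$ to $x$, which equals $\sigma_k(x)$; this matches the operation, which leaves coordinate $k$ unchanged. For an index $i\notin\{j,k\}$ I would split each path from $v_i$ to a leaf $x$ according to whether it meets $\{v_j,v_k\}$. Paths avoiding both $v_j$ and $v_k$ lie in the subgraph unaffected by the construction and so correspond identically in $\cN$ and $\cN'$. For the remaining paths, the key identity is that the number of paths from $v_i$ to $v_{jk}$ in $\cN'$ equals the number of paths from $v_i$ to $v_k$ in $\cN$: paths into $v_k$ that avoid the stack arc survive unchanged as paths into $v_{jk}$, while each path into $v_k$ that uses the arc $v_j\to v_k$ restricts to a unique path into $v_j$, and these become exactly the paths reaching $v_{jk}$ through a former in-arc of $v_j$. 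Multiplying by the common number of paths from $v_{jk}$ (equivalently from $v_k$) on to $x$, and adding the contribution of the paths that avoid the sink, yields $\sigma'_i(x)=\sigma_i(x)$ for all $i\neq j$ and all $x$.

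Putting these together, the tuple assigned to each $x$ by identifying $j$ agrees, in every coordinate other than $j$, with the ancestral tuple of $x$ in $\cN'$ under the induced labelling of $(V-X)-\{v_j\}$ (with $v_k$ relabelled as $v_{jk}$), while coordinate $j$ carries the placeholder $-$; hence the two sets of ordered pairs coincide up to entries with symbol $-$, as required. I expect the only delicate point to be the bookkeeping at the collapsed arc: one must check that no path is lost or double-counted when the in-neighbourhoods of $v_j$ and $v_k$ are merged, in particular when a vertex is a parent of both $v_j$ and $v_k$ and so produces parallel arcs into $v_{jk}$, in which case the path counts still agree. Everything else reduces to the repeatedly used fact that the number of paths from a reticulation to a leaf equals the number of paths from its unique child, so I would present the argument concisely rather than case by case.
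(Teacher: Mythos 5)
The paper offers no proof of this lemma to compare against --- it is declared ``routine and omitted'' --- so what matters is whether your argument correctly supplies that routine verification, and in its core it does. The decomposition of paths from $v_i$ ($i\notin\{j,k\}$) to a leaf $x$ according to whether they meet $\{v_j,v_k\}$, the bijection between paths into $v_k$ in $\cN$ (split by whether or not they use the stack arc, noting that any path through $v_j$ must continue through $v_k$) and paths into the merged vertex in $\cN'$, and the observation that the continuation counts from the merged vertex and from $v_k$ coincide, together give $\sigma'_i(x)=\sigma_i(x)$ for every $i\neq j$, while the coordinate of the merged vertex equals $\sigma_k$. Your bookkeeping at a common parent of $v_j$ and $v_k$ is also right: the two parallel arcs into the merged vertex carry two distinct paths, exactly matching the two paths ($p\to v_k$ and $p\to v_j\to v_k$) in $\cN$.

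One blemish deserves flagging: your appeal to Lemma~\ref{clones1} to certify ``legality'' of identifying $j$ is misplaced. That lemma is proved only for \emph{orchard} networks, whereas the present lemma concerns an arbitrary phylogenetic network; and in any case the fact you want ($\sigma_w=\sigma_k$ for the child $w$ of the bottom reticulation) follows directly from unique-child path counting, with no need for the clone characterisation. More substantively, the third equal coordinate genuinely need not exist: if the sink is exactly $\{v_j,v_k\}$ and the child of $v_k$ is a leaf --- as happens for $v_6,v_7$ in the network $\cN$ of Fig.~\ref{fig1} --- then $\sigma_j$ and $\sigma_k$ are the \emph{only} equal columns. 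This is a quirk inherited from the paper's definition of the identifying operation (which demands three equal columns) rather than a flaw in your path counting, which never uses the third column; the conclusion of the lemma --- that blanking coordinate $j$ yields the profile of the merged network --- stands without it. It would be cleaner to drop the legality remark, or to note explicitly that the operation you analyse is simply the blanking of coordinate $j$, well-defined irrespective of that precondition.
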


\begin{lemma}
\label{lemo2}
Let  $\cN$ and $\cN'$ be orchard networks on $X$ .
If  $\Sigma_{\cN} = \Sigma_{\cN'}$, then $\Sigma_{\id(\cN)} = \Sigma_{\id(\cN')} $.
\end{lemma}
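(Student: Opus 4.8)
The plan is to reduce the statement to a bookkeeping identity about multisets of columns, and then to read off the effect of $\id$ purely from the ancestral profile. Regard $\Sigma_{\cN}$ as the multiset of \emph{columns} $c_i=(\sigma_i(x))_{x\in X}$, one for each vertex $v_i\in V-X$. Since profile equality is only required up to a reordering of the non-leaf vertices, the hypothesis $\Sigma_{\cN}=\Sigma_{\cN'}$ says exactly that $\cN$ and $\cN'$ have the same multiset of columns. By Corollary~\ref{stack1}, $\id(\cN)$ and $\id(\cN')$ are genuine orchard networks, so their profiles are defined, and it suffices to show that these two multisets of columns coincide. For a column value $w$ write $C_w=\{v_i: c_i=w\}$ for the corresponding clone class; note that both the multiplicity $|C_w|$ and the \emph{weight} $\sum_{x\in X}w(x)$ are determined by $\Sigma_{\cN}$ alone.

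First I would record the effect of $\id$ on columns. Writing $\id(\cN)$ as the end of a sequence of stack-arc deletions-with-identifications and applying Lemma~\ref{tuple2} at each step, every such step leaves all surviving columns unchanged (up to $-$ entries) and merely suppresses one repeated column. Hence, for each sink $S$ of $\cN$, the $|S|$ equal columns shared by its reticulations (equal by Lemma~\ref{clones1}(i)) are collapsed to a single copy, so the multiplicity of $w$ in $\Sigma_{\id(\cN)}$ equals $|C_w|-\sum(|S|-1)$, the sum being over the sinks $S\subseteq C_w$. To evaluate this I would pin down the structure of $C_w$ using Lemma~\ref{clones1}: two reticulations can be clones only via case~(i), so all reticulations of $C_w$ lie in one sink $S_w$; and two tree vertices can never be clones, so $C_w$ contains at most one tree vertex. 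Thus $C_w$ consists of one (possibly empty) reticulation-sink $S_w$ together with at most one tree vertex, the displayed sum reduces to $|S_w|-1$, and the multiplicity of $w$ in $\Sigma_{\id(\cN)}$ is either $1$ or $2$, equal to $2$ precisely when $C_w$ contains both a reticulation and a tree vertex.

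The crux is to recognise this last condition from the profile alone, via the dichotomy: $C_w$ contains both a reticulation and a tree vertex if and only if $|C_w|\ge 2$ and $\sum_{x\in X}w(x)\ge 2$. The forward direction is immediate, since a tree vertex has out-degree two and hence column weight at least $2$, while the further presence of a reticulation forces $|C_w|\ge 2$. For the converse, I would argue that if $C_w$ has no tree vertex then it is reticulation-only; taking a bottom reticulation $\beta\in S_w$ whose out-arc leaves $S_w$ (one exists by acyclicity), its external child is not a reticulation, and by Lemma~\ref{clones1}(ii) it cannot be a tree vertex without itself lying in $C_w$, so it is a leaf $\ell$. Then $w=c_\beta$ records only the single path $\beta\to\ell$ and so has weight $1$; conversely, if $|C_w|\ge2$ and the weight is at least $2$, the class cannot be reticulation-only, so it has a tree vertex and, by $|C_w|\ge2$ together with the at-most-one-tree-vertex bound, also a reticulation. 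This converse is the step I expect to be the main obstacle, since it rests on locating a bottom reticulation with a leaf child and on invoking Lemma~\ref{clones1}(ii) correctly.

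With the dichotomy in hand the proof concludes quickly. The multiplicity of each value $w$ in $\Sigma_{\id(\cN)}$ has been expressed entirely in terms of $|C_w|$ and $\sum_{x\in X}w(x)$, both of which are functions of $\Sigma_{\cN}$; the same holds for $\cN'$. Since $\Sigma_{\cN}=\Sigma_{\cN'}$, these multiplicities agree for every $w$, so $\id(\cN)$ and $\id(\cN')$ have identical multisets of columns, which is exactly the assertion $\Sigma_{\id(\cN)}=\Sigma_{\id(\cN')}$.
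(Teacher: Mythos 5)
Your proof is correct, but it takes a genuinely different route from the paper. The paper proceeds by induction on the number of stack arcs of $\cN$: it uses Corollaries~\ref{clones2} and~\ref{maximal} to show that a stack arc of $\cN$ can be matched to a stack arc of $\cN'$ whose end vertices have the same columns, deletes and identifies in both networks simultaneously (Lemma~\ref{lemo1} preserving orchardness, Lemma~\ref{tuple2} tracking the profiles), and then invokes the induction hypothesis; the base case similarly uses those two corollaries to show that if $\cN$ is stack-free then so is $\cN'$. You instead prove the stronger, functional statement that for an orchard network the multiset of columns of $\Sigma_{\id(\cN)}$ is an \emph{explicit function} of the multiset of columns of $\Sigma_{\cN}$: you use Lemma~\ref{clones1} to pin down the structure of each clone class (one reticulation sink plus at most one tree vertex), and your weight dichotomy (a clone class is reticulation-only if and only if its common column has weight $1$, proved via the bottom reticulation of a sink having a leaf child) determines the post-identification multiplicity ($2$ precisely when $|C_w|\ge 2$ and the weight is at least $2$, and $1$ otherwise). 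This weight argument is, at bottom, the same combinatorial observation that drives the paper's proof of Corollary~\ref{maximal}, but you package it so that no induction and no matching between the two networks is needed, and you never use Corollary~\ref{clones2}; what you gain is an explicit canonical description of $\Sigma_{\id(\cN)}$ in terms of $\Sigma_{\cN}$, which immediately yields the lemma. One small point to make explicit: iterating Lemma~\ref{tuple2} along the sequence of stack-arc deletions requires each intermediate graph to be a phylogenetic network (no parallel arcs), which is exactly what Lemma~\ref{lemo1} supplies; you lean on Corollary~\ref{stack1} for the endpoints, but you should cite Lemma~\ref{lemo1} at each intermediate step as well.
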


\begin{proof}
Let $V$ denote the vertex set of $\cN$, and suppose that $\Sigma_{\cN}=\Sigma_{\cN'}$. Let $v_1, v_2, \ldots, v_t$ be a fixed labelling of the vertices of $\cN$ in $V-X$. Note that, as $\Sigma_{\cN}=\Sigma_{\cN'}$, the total number of vertices in $\cN$ and $\cN'$ is $t+|X|$. The proof is by induction on the number $s$ of stack arcs of $\cN$. If $s=0$, then $\cN=\id(\cN)$ and so, by Lemma~\ref{clones1}, if $v_i$ and $v_j$ are clones of $\cN$, then exactly one of $v_i$ and $v_j$ is a reticulation, say $v_i$, and $(v_i, v_j)$ is a tree arc of $\cN$. In particular, all sink classes of $\cN$ have size one. We next show that $\cN'$ has no stack arcs.

If $\cN'$ has a stack arc $e$, then there exists either a $3$-element subset of $V-X$ such that every pair of elements are clones or the two end vertices of $e$ form a maximal pair of clones. Since $\Sigma_{\cN}=\Sigma_{\cN'}$, it follows by Corollaries~\ref{clones2} and~\ref{maximal} that $\cN$ has a sink of size two, a contradiction. Thus $\cN'$ has no stack arcs, and so $\cN'=\id(\cN')$. Hence $\Sigma_{\id(\cN)}=\Sigma_{\id(\cN')}$.

Now assume that $s\ge 1$ and that the lemma holds for all pairs of orchard networks on the same leaf sets, where one of the networks has at most $s-1$ stack arcs. Since $s\ge 1$, there exists a stack arc $(v_i, v_j)$ of $\cN$, in which case $v_i$ and $v_j$ belong to the same sink and are clones of $\cN$. Since $\Sigma_{\cN}=\Sigma_{\cN'}$, it follows by Corollaries~\ref{clones2} and~\ref{maximal} that $\cN'$ has a pair $v'_i$ and $v'_j$ \Blue{of clones}, where $(v'_i, v'_j)$ is a stack arc of $\cN'$ and, for all $x\in X$,
$$\sigma_i(x)=\sigma_j(x)=\sigma_{i'}(x)=\sigma_{j'}(x).$$
Let $\cN_1$ denote the \blue{directed graph} obtained from $\cN$ by \blue{deleting $(v_i, v_j)$, and identifying $v_i$ and $v_j$. By Lemma~\ref{lemo1}, $\cN_1$ is orchard. Similarly, let $\cN'_1$ denote the directed graph obtained from $\cN'$ by deleting $(v'_i, v'_j)$, and identifying $v'_i$ and $v'_j$. By Lemma~\ref{lemo1} again, $\cN'_1$ is orchard.} Then, as $\Sigma_{\cN}=\Sigma_{\cN'}$, we deduce by Lemma~\ref{tuple2} that $\Sigma_{\cN_1}=\Sigma_{\cN'_1}$. Since the number of stack arcs of $\cN_1$ is $s-1$, it follows by the induction assumption that
$$\Sigma_{\id(\cN_1)}=\Sigma_{\id(\cN'_1)}.$$
But $\id(\cN)\cong \id(\cN_1)$ and $\id(\cN')\cong \id(\cN'_1)$, and so $\Sigma_{\id(\cN)}=\Sigma_{\id(\cN')}$, thereby completing the proof of the lemma.
\end{proof}

\begin{proof}[Proof of Theorem~\ref{main2}]
Suppose that $\cN$ and $\cN'$ are orchard networks on $X$ with $\Sigma_{\cN}=\Sigma_{\cN'}$.  By \blue{Corollary~\ref{stack1}}, $\id(\cN)$ and $\id(\cN')$ are orchard networks and so, by Lemma~\ref{lemo2}, $\Sigma_{\id(\cN)}=\Sigma_{\id(\cN')}$. Thus, by Theorem~\ref{main1}, $\id(\cN)$ is isomorphic to $\id(\cN')$.
\end{proof}

\section{Concluding Comments}
\label{comments}

We end by raising two questions \blue{concerning orchard networks} that may be interesting for future work (even in the case where such networks are assumed to be binary). \blue{The first} question is whether or not Theorem~\ref{main2} remains true if one removes the requirement that $\cN'$ is an orchard network. Note that Theorem~\ref{main1} requires only that $\cN$ is an orchard network. A second question is whether orchard networks can be characterised succinctly in terms of forbidden subgraphs. For example, a binary phylogenetic network is tree-child if and only if it has no stack reticulations and no (tree) vertex that is the parent of two distinct reticulations~\cite{sem16}. The class of binary `tree-based' networks have also been characterised \Blue{in a similar} way~\cite{zha16}. Such `forbidden subgraph' characterisations have turned out to be particularly helpful in the study of these phylogenetic networks and we expect the same to apply in the study of orchard networks.

\section*{Appendix: Adjustments Required for the Proof of Theorem~\ref{main1}}

The following lemma replaces~\cite[Lemma 3.3]{erd19}, in which the requirement that the grandparents of $b$ (i.e.\ parents of the parent of $b$), are tree vertices was omitted. Without this extra constraint, the lemma does not hold; an example to illustrate this the phylogenetic network $\cN$ in Fig.~\ref{fig1} by taking $a= x_3$ and $b=x_4$.

\begin{lemma}
\label{lem_new}
Let $\cN$ be a phylogenetic network on $X$, and let $\{a, b\}$ be a $2$-element subset of $X$. Then $\{a, b\}$ is a reticulated cherry of $\cN$ in which $b$ is the reticulation leaf and all grandparents of $b$ are tree vertices if and only if
\begin{enumerate}[{\rm (i)}]
\item $\gamma(a)\subsetneq \gamma(b)$,

\item there is no $x\in X-\{b\}$ such that $\gamma(a)\subsetneq \gamma(x)$, and

\item $\left|\gamma(b)-\bigcup_{x\in X-\{b\}} \gamma(x)\right|=1$.
\end{enumerate}
\end{lemma}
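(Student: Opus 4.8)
The plan is to prove both directions of the equivalence, working throughout with the set $\gamma(b)-\bigcup_{x\in X-\{b\}}\gamma(x)$, whose elements are exactly the non-leaf vertices whose only descendant leaf is $b$; call these the \emph{$b$-exclusive} vertices. (Here $\gamma(x)$ is read as the set of ancestors of $x$, equivalently the support of $\sigma(x)$, so that $v\in\gamma(x)$ precisely when there is a directed path from $v$ to $x$.) The parent $p_b$ of $b$ will be the pivot of the argument, and the content of condition (iii) is precisely that $p_b$ is the unique $b$-exclusive vertex.

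For the forward direction, suppose $\{a,b\}$ is a reticulated cherry with reticulation leaf $b$ and every grandparent of $b$ a tree vertex; write $p_a$ for the (necessarily tree-vertex) parent of $a$ and $p_b$ for the reticulation parent of $b$, so that $(p_a,p_b)$ is an arc. Conditions (i) and (ii) should fall out directly: every ancestor of $a$ is $p_a$ or an ancestor of $p_a$, and $p_a\to p_b\to b$ shows each such vertex is an ancestor of $b$, giving $\gamma(a)\subseteq\gamma(b)$ with $p_b$ witnessing strictness; and if $\gamma(a)\subseteq\gamma(x)$ then $p_a\in\gamma(x)$, so $x$ is a descendant leaf of the tree vertex $p_a$, forcing $x\in\{a,b\}$. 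The real work is (iii), and I would prove that no parent of $p_b$ is $b$-exclusive. Assuming some parent were, pick a $b$-exclusive parent $g^*$ of $p_b$ of minimum height (longest-path distance to $b$); since $g^*$ is a tree vertex, its second child $w$ is again $b$-exclusive and an ancestor of $p_b$, so a path from $w$ to $p_b$ ends at a parent $g'$ of $p_b$ that is a proper descendant of $g^*$, hence $b$-exclusive of strictly smaller height, contradicting minimality. Once no parent of $p_b$ is $b$-exclusive, any $b$-exclusive vertex other than $p_b$ would be an ancestor of $p_b$ and would force one of its parents to be $b$-exclusive, so $p_b$ is the unique $b$-exclusive vertex and (iii) holds. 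This extremal argument, which is exactly where the tree-vertex-grandparent hypothesis is used, is the main obstacle.

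For the converse, assume (i)--(iii). From (iii) there is a unique $b$-exclusive vertex $z$; following a path from $z$ down to $b$, the vertex preceding $b$ is the parent of $b$ and is itself $b$-exclusive, so by uniqueness $z$ equals the parent $p_b$ of $b$. If $p_b$ were a tree vertex its second child would be a further $b$-exclusive vertex, so $p_b$ is a reticulation and $b$ is a reticulation leaf; and if any parent of $p_b$ were a reticulation it too would be $b$-exclusive, so all grandparents of $b$ are tree vertices. It then remains to locate the cherry partner and the connecting arc. Condition (i) gives $p_a\in\gamma(a)\subseteq\gamma(b)$, so the parent $p_a$ of $a$ is an ancestor of $b$; were $p_a$ a reticulation its only descendant leaf would be $a\neq b$, so $p_a$ is a tree vertex, whose second child $w$ lies on a path to $p_b$.

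Finally I would invoke (ii) to collapse this path to a single arc. If $w\neq p_b$ then $w$ is a proper ancestor of $p_b$, and $w$ cannot be $b$-exclusive (else $w=p_b$ by uniqueness), so $w$ reaches some leaf $\ell\neq b$. Then $p_a$ and all its ancestors are ancestors of $\ell$ via $p_a\to w\to\cdots\to\ell$, giving $\gamma(a)\subseteq\gamma(\ell)$, while acyclicity forbids $w$ from being an ancestor of $a$, so $w\in\gamma(\ell)-\gamma(a)$ and $\gamma(a)\subsetneq\gamma(\ell)$ with $\ell\neq b$, contradicting (ii). Hence $w=p_b$, so $(p_a,p_b)$ is an arc and $\{a,b\}$ is a reticulated cherry with reticulation leaf $b$ all of whose grandparents are tree vertices, completing the equivalence.
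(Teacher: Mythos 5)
Your proof is correct. Note that the paper itself does not write out a proof of this lemma: it states only that the argument for the original statement in~\cite[Lemma~3.3]{erd19} goes through once the grandparent hypothesis is added, so your self-contained argument fills in exactly what the paper leaves to the reader, and its skeleton matches the intended one. Conditions (i) and (ii) follow directly from the structure of the reticulated cherry; the converse identifies the unique element of $\gamma(b)-\bigcup_{x\in X-\{b\}}\gamma(x)$ as the parent $p_b$ of $b$ and then uses (ii) to force $(p_a,p_b)$ to be an arc; and your minimum-height extremal argument in the forward direction of (iii) is precisely the step where the added hypothesis (all grandparents of $b$ are tree vertices) enters. Indeed, a $b$-exclusive tree-vertex parent $g^*$ of $p_b$ yields, through its second child, another $b$-exclusive parent of $p_b$ of strictly smaller height, giving the contradiction; whereas a reticulation grandparent (a stack, as with $v_6, v_7$ in Fig.~\ref{fig1}) would itself be $b$-exclusive with no contradiction available --- which is exactly how the uncorrected claim in~\cite{erd19} fails, since there the set in (iii) has size two. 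The only loose end, and it is harmless, concerns the root: under the paper's definitions the root is neither a tree vertex nor a reticulation, so assertions such as ``$p_a$ is a tree vertex'' or ``$p_b$ is a reticulation'' in your converse should be read as ``has out-degree two'' and ``has out-degree one'', respectively; this edge case is glossed over by the lemma statement itself and is not a defect of your argument.
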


\noindent The proof of Lemma~\ref{lem_new} follows the same argument as the original statement of the lemma.

In addition to Lemma~\ref{lem_new}, the proof of Theorem~\ref{main1} requires two further lemmas. The first replaces~\cite[Corollary 4.2]{erd19} (which is correct as stated) and the second is to connect Lemma~\ref{lem_new} with stack-free orchard networks.
\begin{lemma}
Let $\cN$ be a stack-free orchard network, and let $\{a, b\}$ be a cherry or a reticulated cherry of $\cN$. If $\cN'$ is obtained from $\cN$ by reducing $b$ if $\{a, b\}$ is a cherry or cutting $\{a, b\}$ if $\{a, b\}$ is a reticulated cherry, then $\cN'$ is a stack-free orchard network.
\end{lemma}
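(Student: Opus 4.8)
The plan is to establish the two conclusions---that $\cN'$ is orchard and that $\cN'$ is stack-free---by separate arguments, as they draw on different facts. For the orchard conclusion I would appeal to Proposition~\ref{pro:orchard}. Fix any maximal cherry-reduction sequence $\cN', \cN'_1, \ldots, \cN'_m$ of $\cN'$ and prepend the single reduction $\cN \to \cN'$, obtaining $\cN, \cN', \cN'_1, \ldots, \cN'_m$. This is a maximal cherry-reduction sequence of the orchard network $\cN$, so Proposition~\ref{pro:orchard} forces it to be complete; hence $\cN'_m$ is a single vertex, and the tail $\cN', \cN'_1, \ldots, \cN'_m$ is a complete cherry-reduction sequence witnessing that $\cN'$ is orchard.

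For stack-freeness, the key step is to control precisely which arcs of $\cN'$ are \emph{new}, meaning present in $\cN'$ but not in $\cN$. Since a stack arc is by definition an arc both of whose ends are reticulations, and since deleting vertices or arcs can never create a stack, it suffices to verify that no new arc of $\cN'$ joins two reticulations. New arcs arise only through the suppression of vertices of in-degree one and out-degree one: suppressing such a vertex replaces the two arcs incident with it by a single arc from its parent to its child. I would therefore list the suppressed vertices in each operation together with their children.

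The crux is that every new arc is directed into a leaf. If $\{a,b\}$ is a cherry, reducing $b$ suppresses the common parent of $a$ and $b$ and creates one arc into the leaf $a$ (or, when that parent is the root, leaves the isolated vertex $a$ with no arcs at all). If $\{a,b\}$ is a reticulated cherry with reticulation leaf $b$, then cutting $\{a,b\}$ always suppresses the tree vertex $p_a$, yielding an arc into the leaf $a$, and suppresses the parent of $b$ exactly when its in-degree is two, yielding an arc into the leaf $b$. In each case the child of the new arc is $a$ or $b$; a leaf has out-degree zero and so is never a reticulation, so no new arc is a stack arc, and $\cN'$ inherits stack-freeness from $\cN$.

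The step needing the most care is the case, new to the high-in-degree setting, in which cutting a reticulated cherry leaves the parent $p_b$ of $b$ unsuppressed because its in-degree is at least three. Here I would confirm directly that $p_b$ acquires no reticulation neighbour in $\cN'$: as $\cN$ is stack-free every parent of $p_b$ is a tree vertex, deleting the reticulation arc into $p_b$ from the parent of $a$ merely removes one of these parents, and the unique child of $p_b$ remains the leaf $b$. With this case dispatched, together with the trivial root boundary case for cherry reduction, neither operation produces a stack arc, so $\cN'$ is a stack-free orchard network.
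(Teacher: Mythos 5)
Your proposal is correct, and it is worth noting that the paper itself states this lemma \emph{without} proof (it appears in the Appendix as a replacement for~\cite[Corollary~4.2]{erd19}, with the reader left to adapt the earlier arguments), so you have supplied an argument where the paper defers. Your orchard half is exactly the standard argument implicit in~\cite{erd19}: prepend the single reduction $\cN \to \cN'$ to a maximal cherry-reduction sequence of $\cN'$ and invoke Proposition~\ref{pro:orchard}; this is sound, since maximal sequences always exist (each reduction strictly decreases the number of vertices plus arcs). The stack-free half is the genuinely new content needed in this paper's non-binary setting, and your key observation is the right one: reticulation status can never be \emph{gained} under these operations (arc deletion only decreases in-degrees, and a tree vertex has in-degree one), so any stack arc of $\cN'$ would have to be an arc created by suppression, and every such arc has a leaf ($a$ or $b$) as its head, hence cannot be a stack arc. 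You also correctly isolate and dispatch the one case absent from the binary setting, namely $p_b$ surviving with in-degree at least three, where stack-freeness of $\cN$ guarantees its remaining parents are not reticulations and its unique child is the leaf $b$. One cosmetic remark: in that last case you say every parent of $p_b$ is a tree vertex; strictly, a parent could in principle be the root, so the accurate statement is that no parent of $p_b$ is a reticulation (which is all you use). This does not affect the validity of the argument.
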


\begin{lemma}
Let $\cN$ be a stack-free orchard network. If $\{a, b\}$ is a reticulated cherry of $\cN$ in which $b$ is the reticulation leaf, then the grandparents of $b$ are tree vertices.
\end{lemma}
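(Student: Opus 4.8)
The plan is to derive the conclusion almost entirely from the stack-free hypothesis; orchardness turns out to play essentially no role. Write $p_b$ for the parent of $b$. Since $\{a,b\}$ is a reticulated cherry with reticulation leaf $b$, the vertex $p_b$ is a reticulation, and the grandparents of $b$ are precisely the parents of $p_b$, that is, the vertices $g$ with $(g,p_b)$ an arc of $\cN$. So it suffices to show that every such $g$ is a tree vertex.

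First I would eliminate reticulations among the parents of $p_b$: if some parent $g$ of $p_b$ were a reticulation, then $(g,p_b)$ would be an arc joining two reticulations, i.e.\ a stack arc, contradicting that $\cN$ is stack-free. Second I would eliminate leaves: each parent $g$ of $p_b$ has $p_b$ as a child, hence has out-degree at least one and so is not a leaf. By the degree conditions in the definition of a phylogenetic network, a vertex that is neither a leaf nor a reticulation has in-degree one and out-degree two, and is therefore a tree vertex, the sole remaining alternative being the root.

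The one genuinely delicate point, and the step I expect to be the main obstacle, is the root. A priori the root may itself be a parent of $p_b$, and under the strict definition the root (in-degree zero) is not a tree vertex; moreover neither stack-freeness nor orchardness excludes this configuration, since one can build small stack-free orchard networks in which the root is a parent of the reticulation $p_b$. This is harmless for the use to which the lemma is put, namely supplying the hypothesis of Lemma~\ref{lem_new}: there the only property of the grandparents actually needed is that they are \emph{not} reticulations, so that $b$ gains no extra ancestral path that would disturb conditions (i)--(iii). I would therefore either invoke the convention that groups the root with the tree vertices (being reticulation-free, it behaves identically for this argument), or record the conclusion in the weaker but sufficient form ``no grandparent of $b$ is a reticulation''. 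The substantive content is the single stack-free observation; everything else is degree bookkeeping.
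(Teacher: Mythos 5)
Your argument is correct, and it is essentially the intended one: the paper states this lemma in the Appendix without any proof, and the substantive content is exactly your stack-free observation (a reticulation parent of $p_b$ would make the arc into $p_b$ a stack arc), together with the degree bookkeeping that rules out leaves; you are also right that orchardness plays no role. Your caveat about the root is a genuine point rather than pedantry: under the paper's definitions a tree vertex has in-degree one and out-degree two, so the root is not a tree vertex, and the root really can be a grandparent of $b$ in a stack-free orchard network. For a concrete witness, let the root $\rho$ have children $p_a$ and $p_b$, let $p_a$ have children $a$ and $p_b$, and let $p_b$ have the single child $b$. This is a phylogenetic network with no parallel arcs, it is stack-free (it has only one reticulation), and it is orchard (cutting $\{a,b\}$ suppresses both $p_a$ and $p_b$, leaving the cherry $\{a,b\}$, which reduces to a single vertex); yet the grandparents of $b$ are $p_a$ and the root, so the lemma read literally fails on this example. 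Note also that, with $\gamma$ the ancestor-set map of~\cite{erd19}, conditions (i)--(iii) of Lemma~\ref{lem_new} hold for this example, so the ``if'' direction of that lemma requires the same convention; this confirms that the authors implicitly count the root among the tree vertices. Either of your repairs is therefore appropriate: adopt that convention (the root is not a reticulation, which is all the argument ever uses), or record the conclusion in the weaker form ``no grandparent of $b$ is a reticulation'', which is all that the proof of Theorem~\ref{main1} needs when this lemma is combined with Lemma~\ref{lem_new}.
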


\noindent Using these replacement lemmas, together with~\cite[Proposition 4.1]{erd19} and~\cite[Lemma 5.1]{erd19} replaced by Proposition~\ref{pro:orchard} and Lemma~\ref{tuple1}, respectively, the proof of Theorem~\ref{main1} follows the same argument, {\em mutatis mutandis}, as~\cite[Theorem 2.2]{erd19}.


\begin{thebibliography}{99}
\bibitem{bor18} M.\ Bordewich, K.T.\ Huber, V.\ Moulton, C.\ Semple, Recovering normal networks from shortest inter-taxa distance information, Journal of Mathematical Biology 77 (2018) 571--594.

\bibitem{bor16} M.\ Bordewich, C.\ Semple, Determining phylogenetic networks from inter-taxa distances, Journal of Mathematical Biology 73 (2016) 283--303.

\bibitem{car08} G.\ Cardona, M.\ Llabr\'{e}s, F.\ Rossell\'{o}, G.\ Valiente, A distance metric for a class of tree-sibling phylogenetic networks, Bioinformatics 24 (2008) 1481--1488.

\bibitem{car09} G.\ Cardona, F.\ Rossell\'{o}, G.\ Valiente, Comparison of tree-child phylogenetic networks, IEEE/ACM Transactions on Computational Biology and Bioinformatics 6 (2009) 552--569.

\bibitem{doo99} W.F.\ Doolittle, Phylogenetic classification and the universal tree, Science 284 (1999) 2124--2128.

\bibitem{erd19}  P.L.\ Erd{\"o}s, C.\ Semple, M.\ Steel,  (2019). A class of phylogenetic networks reconstructable from ancestral profiles. Mathematical Biosciences, 313: 33--40.

\bibitem{fel04} J.\ Felsenstein, Inferring Phylogenies, Sinauer Associates, Sunderland, MA, 2004.


\bibitem{hus10} D.H.\ Huson, R.\ Rupp, C.\ Scornavacca, Phylogenetic Networks: Concepts, Algorithms and Applications, Cambridge University Press, 2010.

\bibitem{jan20} R.\ Janssen, Y.\ Murakami,  On cherry-picking and network containment, arXiv:1812.08065v2  (2020). 

\bibitem{jet12} W.\ Jetz, G.H.\ Thomas, J.B.\ Joy, K.\ Hartmann, A.O.\ Mooers, The global diversity of birds in space and time, Nature 491 (2012)  444--448.

\bibitem{koo15} E.V.\ Koonin, The turbulent network dynamics of microbial evolution and the statistical tree of life, Journal of Molecular Evolution 80 (2015)  244--250.

\bibitem{lin20} S.\ Linz, C.\ Semple, Caterpillars on three and four leaves are sufficient to reconstruct binary normal networks, \Blue{Journal of Mathematical Biology 81 (2020) 961--980.}

\bibitem{par15}  F.\ Pardi, C.\ Scornavacca, Reconstructible phylogenetic networks: Do not distinguish the indistinguishable, PLoS Computational Biology 11 (2015) e1004135.

\bibitem{sem16} C.\ Semple, Phylogenetic networks with every embedded phylogenetic tree a base tree, Bulletin of Mathematical Biology 78 (2016) 132--137.

\bibitem{sem03} C.\ Semple, M.\ Steel, Phylogenetics, Oxford University Press, Oxford, 2003.

\bibitem{uph19} N.S.\ Upham, J.A.\ Esselstyn, W.\ Jetz. Inferring the mammal tree: Species-level sets of phylogenies for questions in ecology, evolution, and conservation. 
PLOS Biology \Blue{17 (2019) e3000494}.

\bibitem{wil08} S.J.\ Willson, Reconstruction of certain phylogenetic networks from the genomes at their leaves, Journal of Theoretical Biology 252 (2008) 338--349.

\bibitem{wil10} S.J.\ Willson, Properties of normal phylogenetic networks, Bulletin of Mathematical Biology 72 (2010) 340--358.

\bibitem{wor20} M.\ Worobey, \Blue{J.\ Pekar, B.B.\ Larsen, M.I.\ Nelson, V.\ Hill, J.B.\ Joy, A.\ Rambaut, M.A.\ Suchard, J.O.\ Wertheim, P.\ Lemey,} The emergence of SARS-CoV-2 in Europe and North America, Science \Blue{370 (2020) 564--570}.

\bibitem{zha16} L.\ Zhang, On tree-based phylogenetic networks, Journal of Computational Biology 23 (2016) 553--565.
\end{thebibliography}
\end{document}